\DeclareMathOperator{\homeo}{Homeo}
\DeclareMathOperator{\aut}{Aut}
\DeclareMathOperator{\Stab}{Stab}
\DeclareMathOperator{\Sym}{Sym}
\DeclareMathOperator{\ev}{ev}
\newtheorem{THM}{Theorem}
\newtheorem{thm}{Theorem}
\newtheorem{lem}[thm]{Lemma}
\newtheorem{cor}[thm]{Corollary}
\theoremstyle{definition}
\newtheorem{defn}[thm]{Definition}
\newtheorem{question}[thm]{Question}
\begin{document}
\title{The complex of cuts in a Stone space}
\author{Beth Branman and Robert Alonzo Lyman}
\maketitle
\begin{abstract}
  Stone's representation theorem asserts a duality
  between Boolean algebras on the one hand and \emph{Stone spaces,}
  which are compact, Hausdorff, and totally disconnected,
  on the other.
  This duality implies a natural isomorphism between the homeomorphism group
  of the space and the automorphism group of the algebra.
  We introduce a \emph{complex of cuts}
  on which these groups act,
  and prove that when the algebra is countable
  and the space has at least five points,
  that these groups are the full automorphism group of the complex.
  % We extend our results to \emph{Stone space systems,}
  % that is, finite nested collections of Stone spaces,
  % each successive one a closed subset of its parent.
\end{abstract}
A \emph{Boolean algebra} is a nonempty set $B$
equipped with a pair of binary operations
$\vee$ and $\wedge$, special elements $0$ and $1$,
and a unary operation $\lnot$ that satisfy simple axioms
inspired by the algebra of logical statements
(where $\vee$ and $\wedge$ are ``or'' and ``and'',
$0$ is ``false'' and $1$ is ``true'' and $\lnot$ is ``not'', respectively)
or the algebra of subsets of a set
(where $0$ is the empty set, $1$ is the entire set,
and the operations are union, intersection and complementation).
Boolean algebras turn out to be partially ordered---in fact, lattices;
in the cases where the elements of the algebra are subsets of a set,
the partial order is inclusion.

A \emph{Stone space} $E$ is a topological space which is compact,
Hausdorff and totally disconnected.
Stone spaces have a basis of clopen (that is, closed and open) sets.

From a Stone space, one can produce a Boolean algebra,
namely the algebra of clopen sets.
Since unions, intersections and complements of clopen sets are clopen,
this really is a Boolean algebra.

Conversely, from a Boolean algebra $B$, one can produce
(in the presence of the Axiom of Choice)
a Stone space $E(B)$,
namely the space of \emph{ultrafilters} on the algebra.
A basis of open sets is given by sets of the form
\[
  U_b = \{\omega : b \in \omega \}.
\]
It turns out that the complement of $U_b$ is $U_{\lnot b}$
(because ultrafilters have the \emph{maximality} property:
for each $b \in B$ and ultrafilter $\omega$, either $b \in \omega$
or $\lnot b \in \omega$ but not both).
Therefore these sets are clopen,
making the space $E(B)$ totally disconnected.
It is also an easy exercise to verify that this basis of open sets
generates a topology which is Hausdorff,
since distinct ultrafilters must disagree on some element of $B$.
Compactness requires the Axiom of Choice.

In~\cite{Stone},
Stone proved that given a Boolean algebra $B$,
the map $b \mapsto U_b$ is an isomorphism
from $B$ to the Boolean algebra of clopen subsets of $E(B)$.

From this fundamental fact,
several observations are possible.
\begin{enumerate}
\item The group of homeomorphisms of $E(B)$
  and the group of isomorphisms of $B$
  are abstractly isomorphic.
  In fact, there are natural \emph{topologies}
  to equip $\homeo(E)$ and $\aut(B)$ with,
  and these topologies really are the same topology.
\item Since closed subsets of compact sets are compact,
  every clopen subset of $E$ is covered by finitely many
  clopen basis elements,
  so $E(B)$ is second countable if and only if $B$ is countable.
\item In the case that either $E$ is second countable
  or $B$ is countable,
  we have that $E$ is a closed subset of the Cantor set
  and (by Stone duality) that $B$ is a quotient of
  the countable free Boolean algebra
  (which we might suppose we knew already by countability).
\item Again assuming countability of $B$
  or second countability of $E$,
  we have that $\homeo(E) = \aut(B)$
  are \emph{non-Archimedean Polish groups,}
  being the automorphism group of a countable structure,
  namely $B$.
  As such, they are closed subgroups of
  $\Sym(\mathbb{N})$, the group of bijections of a countable set.
\end{enumerate}

In this paper,
inspired by the \emph{curve complex}
on a surface (particularly the case of a surface of infinite topological type)
we introduce a complex
we call the \emph{complex of cuts} $\mathscr{C}$
on which $\homeo(E) = \aut(B)$ acts.

\begin{THM}
  Suppose $E$ is second countable and contains at least five points.
  The group of automorphisms of $\mathscr{C}$,
  equipped with the permutation topology,
  is topologically isomorphic to $\homeo(E) = \aut(B)$.

  When $E$ is finite, the diameter of the finite graph $\mathscr{C}$
  is at most four, while when $E$ is infinite,
  $\mathscr{C}$ is a countably infinite graph which has diameter two.
  In particular, it is connected.
\end{THM}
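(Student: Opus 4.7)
The theorem has two essentially independent parts: the topological isomorphism $\aut(\mathscr{C}) \cong \homeo(E)$ and the diameter bounds. I would treat them separately.

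For the isomorphism, the natural map $\iota : \homeo(E) \to \aut(\mathscr{C})$ sending $\phi$ to the automorphism $\{U, U^c\} \mapsto \{\phi(U), \phi(U^c)\}$ is continuous with respect to the permutation topologies: a basic neighborhood in $\aut(\mathscr{C})$ demands finite agreement on finitely many cuts, which corresponds under Stone duality to a basic neighborhood in $\homeo(E) = \aut(B)$. Injectivity uses the five-point hypothesis: if $\phi$ fixes every cut, then for each nontrivial clopen $U$ either $\phi(U) = U$ or $\phi(U) = U^c$, and simultaneous consideration of several cuts with all four intersections nonempty forces the trivial choice on each, so $\phi$ fixes every ultrafilter and hence is the identity. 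Since both groups are non-Archimedean Polish, a continuous bijective homomorphism between them is automatically a topological isomorphism, so the remaining task is surjectivity.

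Surjectivity is the heart of the theorem and proceeds by combinatorial reconstruction of points of $E$ from $\mathscr{C}$. Every point $x \in E$ gives the ultrafilter $\omega_x = \{b \in B : x \in U_b\}$, which in turn selects one side of every cut. The key lemma is that orientations on cuts arising from ultrafilters are characterized by a graph-theoretic coherence condition on $\mathscr{C}$: for example, compatibility of the chosen sides across each edge, phrased in terms of the adjacency relation. Once established, any $\Phi \in \aut(\mathscr{C})$ permutes the coherent orientations and therefore permutes points of $E$, yielding a bijection $\phi : E \to E$; by construction $\phi$ takes clopens to clopens, so it is a homeomorphism inducing $\Phi$. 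The main obstacle will be producing this coherence characterization while ruling out \emph{spurious} orientations that satisfy the combinatorial condition but are not ultrafilters, and this is plausibly where the five-point hypothesis enters, to exclude pathological small configurations.

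For the diameter statements, when $E$ is infinite I would, given two cuts $\{U, U^c\}$ and $\{V, V^c\}$, produce a common adjacent cut $\{W, W^c\}$ by taking $W$ to be a proper clopen subset of one of the four intersection regions $U \cap V$, $U \cap V^c$, $U^c \cap V$, $U^c \cap V^c$; infinitude of $E$ together with the clopen basis of a Stone space guarantees that some such region splits further, which should give diameter at most two. Countable infinity of the vertex set follows from countability of $B$. The finite case reduces to a direct finite analysis: every cut can be refined in a bounded number of steps to a maximal clopen partition of $E$, and two maximal partitions are cheaply connected, giving the uniform bound of four.
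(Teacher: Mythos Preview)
Your outline for the diameter statements and for the easy parts of the isomorphism (continuity, injectivity) is reasonable, and your appeal to the open mapping theorem for Polish groups to upgrade a continuous bijection to a topological isomorphism is legitimate and in fact cleaner than the paper's direct argument. The infinite-diameter argument is essentially the paper's: one of the four intersection regions is infinite, and that region itself gives the common neighbor. Your finite-diameter sketch, however, is too vague to be a proof; the paper argues concretely by first moving each cut to an adjacent \emph{outermost} cut (one whose smaller side has exactly two points), and then observing that two crossing outermost cuts have smaller sides meeting in a single point, so their three-element union determines a common neighbor, giving diameter at most four.

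The real gap is surjectivity. You correctly identify this as the heart of the theorem, but what you write is not a proof: you posit that points of $E$ correspond exactly to ``coherent orientations'' on $\mathscr{C}$ satisfying some graph-theoretic condition, yet you neither state the condition nor argue for it, and you explicitly flag ruling out spurious orientations as ``the main obstacle'' without addressing it. There is also a more basic issue: vertices of $\mathscr{C}$ are \emph{unordered} pairs $\{U,U^c\}$, so ``choosing a side of a cut'' is not even a primitive notion in the graph. You would first need to recover sides combinatorially (for instance as components of $L(\gamma)^\perp$, which already fails for outermost $\gamma$), and only then formulate and prove your coherence lemma. This wall-space style reconstruction may be feasible, but it is substantial work that your proposal does not supply.

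The paper takes a rather different route. It introduces \emph{pants decompositions} (maximal simplices satisfying a local finiteness condition) and their \emph{adjacency graphs}, and shows that any isomorphism $\Phi$ of $\mathscr{C}$ preserves these. From this it deduces that $\Phi$ preserves \emph{outermost} cuts (detected as those of valence at most two in every adjacency graph) and \emph{peripheral pairs} (detected by counting components of $(L(\gamma)\cap L(\eta))^\perp$). For finite $E$, points are then recovered as the singleton intersections of the two-element sides of crossing outermost cuts, and $\Phi$ is shown to permute these consistently, yielding the required bijection. For infinite $E$, the paper builds a \emph{principal spherical exhaustion} --- an increasing sequence of ``spheres'' (finite tuples of compatible cuts with controlled combinatorics) --- shows $\Phi$ carries it to another such exhaustion, applies the finite case level by level, and assembles the resulting finite bijections into a homeomorphism via an inverse-limit description of $E$. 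None of this machinery appears in your sketch.
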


Briefly
a \emph{cut} in $E$ is a partition of $E$
into two disjoint clopen sets $U \sqcup V$.
A cut is \emph{peripheral} if either $U$ or $V$ has $0$ or $1$ element.
Two cuts $U\sqcup V$ and $U' \sqcup V'$
\emph{cross} if all four of the pairwise intersections
$U \cap U'$, $U \cap V'$, $V \cap U'$ and $V \cap V'$ are nonempty,
otherwise they are \emph{compatible.}
The \emph{complex of cuts} has as vertices the non-peripheral cuts in $E$,
with two cuts being adjacent in $\mathscr{C}$ when they are compatible.
To think of $\mathscr{C}$ as a complex,
one may, as is standard with simplicial graphs,
add a simplex when its $1$-skeleton is present.

For a second countable Stone space $E$,
we may embed $E$ in the sphere $S^2$
and draw a Jordan curve which separates points in $U$ from points in $V$
and similarly for $U'$ and $V'$.
Compatibility, one checks,
is equivalent to being able to draw these curves disjointly.

Although we will attempt to prefer the Stone space perspective,
it is instructive to consider the Boolean algebra perspective as well:
there a \emph{cut} in $B$ is a pair $\{ a, \lnot a \}$,
a cut is \emph{peripheral} if either of $a$ or $\lnot a$
is $0$ or an \emph{atom,} that is, an immediate successor of $0$
in the partial order on $B$.
Two cuts $\{a, \lnot a\}$ and $\{a', \lnot a' \}$
\emph{cross} if none of the meets
$a \wedge a'$, $a \wedge \lnot a'$, $\lnot a \wedge a'$,
$\lnot a \wedge \lnot a'$ are $0$,
otherwise they are \emph{compatible.}

Since homeomorphisms or automorphisms
preserve complements,
we see that $\homeo(E) = \aut(B)$
acts on $\mathscr{C}$
by the rule that $\Phi(U \sqcup V) = \Phi(U) \sqcup \Phi(V)$,
or that $\Phi(\{a, \lnot a\}) = \{ \Phi(a), \lnot\Phi(a) \}$.

\section{Stone duality}
\subsection{Preliminary definitions}
\begin{defn}\rm
A \emph{Boolean algebra}
is a nonempty set $B$
with distinguished elements $0$ and $1$,
and three operations $\vee$, $\wedge$ and $\lnot$,
satisfying the following axioms.
\begin{enumerate}
\item The binary operations $\vee$ and $\wedge$
  are associative and commutative.
\item For all $a$ and $b$ in $B$,
  we have $a \vee (b \wedge a) = a$
  and $a \wedge (b \vee a) = a$.
\item $1$ is an identity for $\wedge$,
  while $0$ is an identity for $\vee$.
\item $\vee$ distributes over $\wedge$ and vice versa.
\item $a \vee \lnot a = 1$ while $a \wedge \lnot a = 0$.
\end{enumerate}
\end{defn}

The algebra $B$ is at the same time a \emph{lattice,}
a partially ordered set with ``meets''
$a \wedge b$ and ``joins'' $a \vee b$,
where we say that $a \le b$ when $a \vee b = b$.
\begin{defn}\rm
A \emph{homomorphism} of Boolean algebras $f\colon A \to B$
is a map $f \colon A \to B$ of sets
which preserves meets and joins,
and sends $1 \in A$ to $1 \in B$
and $0 \in A$ to $0 \in B$.
It follows from the axioms that $f(\lnot a) = \lnot f(a)$ as well.
Thus there is a \emph{category} of Boolean algebras and homomorphisms.
The \emph{terminal} Boolean algebra is $\mathbb{1} = \{ 0 = 1\}$,
while the \emph{initial} one is $\mathbb{2} = \{ 0, 1 \}$.
\end{defn}

\begin{defn}\rm
A \emph{filter} $\omega$ on a Boolean algebra $B$
is a proper, nonempty subset
which is \emph{meet closed}
in the sense that if $a$ and $b$ are in $\omega$,
then so is $a \wedge b$,
and also \emph{upward closed}
in the sense that if $a \in \omega$ and $a \le b$
(that is, $a \vee b = b$)
then $b \in \omega$.
\end{defn}

\begin{defn}
A filter (which we require to be a \emph{proper} subset, recall)
is called an \emph{ultrafilter}
when it is maximal with respect to inclusion.
For a Boolean algebra,
this is equivalent to the condition that for each $b \in B$,
either $b \in \omega$ or $\lnot b \in \omega$,
but not both.
\end{defn}

\begin{lem}\label{lem:ultrafilter}
  Assuming Zorn's Lemma,
  every filter $\omega$ on a Boolean algebra $B$
  may be extended to an ultrafilter.
  A filter is an ultrafilter if and only if,
  for each $b \in B$,
  either $b \in \omega$ or $\lnot b \in \omega$,
  but not both.
  If $f \colon B \to \mathbb{2}$ is a homomorphism,
  the collection of $b \in B$ for which $f(b) = 1 \in \mathbb{2}$
  is an ultrafilter.
  Finally,
  suppose that $S \subset B$
  is a subset with the property that \emph{every} ultrafilter on $B$
  contains an element of $S$.
  Then $S$ has a finite subset $S_0$ with the same property.
\end{lem}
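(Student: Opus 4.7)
The plan is to prove the four assertions in order, since each invokes the ones before it. For the first, I would apply Zorn's Lemma to the poset $\mathscr{F}$ of proper filters on $B$ containing $\omega$, ordered by inclusion. Any chain $\{\omega_i\}$ in $\mathscr{F}$ has upper bound $\bigcup_i \omega_i$: this union is upward closed, is meet closed because any two of its elements lie together in some $\omega_i$, and is proper because $0$ belongs to no $\omega_i$. A maximal element of $\mathscr{F}$ is then an ultrafilter extending $\omega$.

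For the characterization, I would first observe that the two clauses of the dichotomy cannot both hold: if $b, \lnot b \in \omega$, then by meet closure $0 \in \omega$, contradicting properness. Now suppose $\omega$ is an ultrafilter and $b \notin \omega$. The filter generated by $\omega \cup \{b\}$ fails to be proper by maximality, so $0$ belongs to it, forcing $a \wedge b = 0$ for some $a \in \omega$; equivalently $a \le \lnot b$, so $\lnot b \in \omega$ by upward closure. Conversely, if $\omega$ satisfies the dichotomy and $\omega' \supsetneq \omega$ is a proper filter, any $b \in \omega' \setminus \omega$ has $\lnot b \in \omega \subset \omega'$, whence $0 = b \wedge \lnot b \in \omega'$, a contradiction.

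The third assertion is a direct verification that $\omega = f^{-1}(1)$ is nonempty and proper (since $f(1) = 1$ and $f(0) = 0$), meet closed and upward closed (since $f$ is a homomorphism of lattices), and satisfies the dichotomy (since $f(\lnot b) = \lnot f(b)$).

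The main obstacle is the final assertion, which is essentially compactness of the Stone space $E(B)$, dualizing the statement that the cover $\{U_b : b \in S\}$ of $E(B)$ admits a finite subcover. I would argue the contrapositive: suppose no finite $S_0 \subset S$ meets every ultrafilter. Then for each finite $S_0$ some ultrafilter avoids $S_0$, and by the dichotomy it contains $\lnot b$ for every $b \in S_0$ and in particular the meet $\bigwedge_{b \in S_0} \lnot b$, forcing this meet to be nonzero. Hence $\{\lnot b : b \in S\}$ has the finite meet property, and the collection of $c \in B$ with $c \ge \bigwedge_{b \in S_0} \lnot b$ for some finite $S_0 \subset S$ is a proper filter. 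By the first assertion this filter extends to an ultrafilter, which contains $\lnot b$ for every $b \in S$ and so avoids $S$ entirely, contradicting the hypothesis.
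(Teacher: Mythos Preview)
Your proposal is correct and follows essentially the same route as the paper: Zorn's Lemma on the poset of filters for the first assertion, the dichotomy characterization for the second and third, and for the fourth the observation that if no finite subset of $S$ suffices then the meets $\bigwedge_{b \in S_0} \lnot b$ are all nonzero and generate a proper filter extending to an ultrafilter missing $S$. If anything your treatment of the second assertion is more careful than the paper's, which writes out only the implication from the dichotomy to maximality and leaves the converse implicit.
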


\begin{proof}
  Suppose first that $\omega$ is a filter.
  The collection of filters which contain $\omega$ as a subset
  is partially ordered by inclusion.
  Supposing $\omega_1 \le \omega_2 \le \cdots$ is a chain of filters containing $\omega$,
  set $\omega_\infty = \bigcup_{n = 1}^\infty \omega_n$.
  This is a proper nonempty subset of $B$,
  (since each $\omega_n$ is a proper subset, none of the $\omega_n$ contain $0$, for instance).
  If $a \in \omega_\infty$ and $b \in \omega_\infty$,
  then they are each contained in some $\omega_n$,
  which contains $a \wedge b$ on account of being a filter.
  Similarly we see that $\omega_\infty$ is upward closed,
  so $\omega_\infty$ is a filter which is an upper bound for our given chain.
  By Zorn's lemma, our poset then has maximal elements, which are ultrafilters by definition.

  Supposing $\omega$ is a filter with the property that for each $b \in B$,
  eithe $b \in \omega$ or $\lnot b \in \omega$ but not both,
  we see that if a set $S$ properly contains $\omega$,
  it must contain both $b$  and $\lnot b$ for some $b \in B$.
  Such a set cannot be a filter, since the meet closed property forces it to contain $0$
  and the upward closed property forces it to then fail to be a proper subset of $B$;
  therefore $\omega$ is an ultrafilter.

  Given a homomorphism $f \colon B \to \mathbb{2}$,
  the algebra of $\mathbb{2}$ implies that the preimage of $1$ under $f$
  is a filter such that for each $b \in B$, either $f(b) = 1$ or $f(\lnot b) = 1$
  but not both.

  Note that for any element $b \in B$,
  note that the set $F(b) = \{ a \in B : b \le a\}$ is a filter on $B$ provided $b \ne 0$,
  and that if $a \le B$, then $F(a) \supset F(b)$.
  Supposing that $S \subset B$
  has the property that every ultrafilter on $B$ contains an element from $S$,
  given a finite subset $S_0 = \{ b_1, \ldots, b_n \} \subset S$,
  consider the set $F(S_0) = F(\lnot b_1 \wedge \cdots \wedge \lnot b_n)$.
  If this set is a filter, it will fail to contain any $b_n$;
  indeed $S_0$ will fail to be our sought-for finite subset of $S$ precisely when $F(S_0)$ is a filter.
  Supposing towards a contradiction that every $F(S_0)$ is a filter,
  letting $S_0$ vary among the finite subsets of $S$, we see that
  this collection of filters is partially ordered:
  when $S_0 \subset S'_0$, we have $F(S_0) \subset F(S'_0)$.
  Since we saw that unions of chains of filters are filters,
  appealing to Zorn's lemma again,
  we see that actually there is an ultrafilter $\omega$ containing no element of $S$,
  providing the contradiction we seek.
\end{proof}

\subsection{Stone Duality Theorem}

The collection of ultrafilters on $B$
is a Stone space $E(B)$,
(that is, it is compact, Hausdorff and totally disconnected)
where the basic clopen sets are of the form $U_b = \{ \omega : b \in \omega\}$.

Notice that since an ultrafilter defines a map $\omega\colon B \to \mathbb{2}$,
we see that if $g\colon A \to B$ is a homomorphism of Boolean algebras
and $\omega \in E(B)$ is a point of the associated Stone space,
we naturally obtain a point $g^*\omega$ in $E(A)$
by the rule that $g^*\omega$
is the ultrafilter corresponding to the composition
\[
  \begin{tikzcd}
    A \ar[r, "g"] & B \ar[r, "\omega"] & \mathbb{2}.
  \end{tikzcd}
\]
Indeed, the assignment $\omega \mapsto g^*\omega$
yields a continuous map $g^*\colon E(B) \to E(A)$:
we compute that the preimage ${(g^*)}^{-1}(U_a)$ satisfies
\[
  {(g^*)}^{-1}(U_a) = \{ \omega \in E(B) : a \in g^*\omega \}
  = \{\omega \in E(B) : g(a) \in \omega \} = U_{g(a)}.
\]
This defines a \emph{contravariant functor}
$E$ from the category of Boolean algebras and homomorphisms
to the category of Stone spaces and continuous maps
sending $B$ to $E(B)$ and $g\colon A \to B$
to $g^*\colon E(B) \to E(A)$.

Conversely, if $S$ is a Stone space,
it has a basis of clopen sets,
and the clopen sets of $S$ organize into a Boolean algebra
$\Omega(S)$.
If $f\colon S \to T$ is a continuous map,
observe that if $U \subset T$ is open (respectively closed),
then $f^{-1}(U)$ is open (respectively closed)
in $S$ by continuity,
so we have a map $f^{-1}\colon \Omega(T) \to \Omega(S)$,
and this assignment defines a contravariant functor $\Omega$
from Stone spaces to Boolean algebras.

\begin{thm}[Stone duality]
  The functors $E$ and $\Omega$ are (contravariant)
  equivalences of categories,
  in the sense that there exist isomorphisms
  $\eta\colon B \to \Omega(E(B))$
  and $\epsilon\colon E(\Omega(S)) \to S$
  natural in their arguments.

  Consequently the (abstract) group of automorphisms
  of a Boolean algebra $B$ is isomorphic
  to the group of homeomorphisms of $E(B)$,
  and moreover,
  the \emph{permutation topology}
  on $\aut(B)$ and the \emph{compact--open topology}
  on $\homeo(E(B))$ agree.
\end{thm}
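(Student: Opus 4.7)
The plan is to define the unit $\eta$ and counit $\epsilon$ in the evident way, prove each is an isomorphism using Lemma~\ref{lem:ultrafilter} at the decisive points, verify naturality by a short diagram chase, and then translate the equivalence into statements about automorphism groups and their topologies.

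For $\eta \colon B \to \Omega(E(B))$ I set $\eta(b) = U_b$; this is a Boolean algebra homomorphism by the filter axioms together with ultrafilter maximality. Injectivity reduces, for $a \ne b$, to producing an ultrafilter separating $a$ from $b$: the upward closure of a nonzero element of the symmetric difference is a filter, which extends to an ultrafilter by the first part of Lemma~\ref{lem:ultrafilter}. Surjectivity is where compactness enters. Given a clopen $C \subset E(B)$, the basic sets $U_b \subset C$ together with the basic sets $U_{b'} \subset E(B)\setminus C$ cover every ultrafilter, so the finite-subcover part of Lemma~\ref{lem:ultrafilter} extracts finitely many that still cover $E(B)$; a short argument, using that no ultrafilter in $C$ can contain a $b'$, then identifies $C$ with $U_{b_1 \vee \cdots \vee b_n}$.

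For $\epsilon \colon E(\Omega(S)) \to S$ I send an ultrafilter $\omega$ on the clopen algebra of $S$ to the unique point of $\bigcap_{V \in \omega} V$, which is nonempty by compactness of $S$ together with the finite intersection property, and a singleton by total disconnectedness and the maximality of $\omega$. The preimage of a basic clopen $V \subset S$ under $\epsilon$ is exactly $U_V$, so $\epsilon$ is continuous; the principal ultrafilter at a point $x$ maps to $x$, giving surjectivity; distinct ultrafilters disagree on some clopen, giving injectivity; and a continuous bijection from a compact space to a Hausdorff one is a homeomorphism. Naturality of both $\eta$ and $\epsilon$ is a routine chase through the definitions of the contravariant functors $E$ and $\Omega$.

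The group isomorphism $\aut(B) \cong \homeo(E(B))$ is now a formal consequence. The topological claim is the step I expect to be most delicate. In one direction, a basic permutation-topology neighborhood of the identity is the pointwise stabilizer of a finite tuple $b_1, \ldots, b_k \in B$, and this corresponds under the duality to $\bigcap_i \{ \Phi : \Phi(U_{b_i}) = U_{b_i} \}$, which is compact-open precisely because each $U_{b_i}$ and its complement are \emph{simultaneously} compact and open. In the other direction, given a subbasic compact-open neighborhood $N(K, W)$ at some $\Phi_0$, I would cover the compact set $\Phi_0(K)$ by finitely many basic clopen sets contained in $W$, take their union $U_c$, write $\Phi_0^{-1}(U_c) = U_d$ using surjectivity of $\eta$, and observe that every $\Phi$ whose algebraic image agrees with $\Phi_0$'s on $d$ sends $K \subset U_d$ into $U_c \subset W$. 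Coordinating these two clopen covers of $K$ and of $\Phi_0(K)$ is the only real content of the agreement of topologies; everything else is formal.
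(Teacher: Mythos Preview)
Your proposal is correct and follows essentially the same route as the paper's proof: define $\eta(b) = U_b$ and $\epsilon(\omega)$ as the unique point of $\bigcap_{V \in \omega} V$, verify these are natural isomorphisms via the ultrafilter lemma and compactness, and then compare neighborhood bases for the two topologies. The only tactical differences are that you invoke the compact--Hausdorff bijection lemma for $\epsilon$ rather than proving openness directly, and your handling of the topology comparison (pairing $U_b$ with its complement $U_{\lnot b}$ and working at an arbitrary $\Phi_0$) is, if anything, slightly more careful than the paper's.
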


Before turning to the proof,
let us recall quickly the topologies in question
in the ``moreover'' statement above.
Since in order to understand the topology on a group
one needs only to know what a neighborhood basis of the identity is,
we give only neighborhoods of the identity.

For the permutation topology (on a set, say),
the sets $\Stab(b) = \{\varphi \in \aut(B) : \varphi(b) = b \}$
are declared open and form a subbasis for the topology.
Put another way,
a basic open set is a finite intersection of stabilizers,
or a set of the form $\{\varphi : \varphi(b_i) = b_i\}$
for some finite set $\{b_1,\ldots,b_n\}$ in $B$.

For the compact--open topology
on a (compact, say) topological space $S$,
the sets $V(K,U) = \{ \varphi \in \homeo(S) : \varphi(K) \subset U\}$
are declared open,
where $K$ is compact,
$U$ is open,
and this set is an identity neighborhood
when $K \subset U$.
Since we assume that $S = E(B)$ is compact,
this always gives $\homeo(S)$ the structure of a topological group.
(In general with this topology it is possible
for inversion to fail to be continuous~\cite{Dijkstra}).
When $S$ itself has a subbasis of open sets,
we may restrict ourselves to choosing $U$ from this subbasis
to produce a subbasis for the compact--open topology.

With respect to the compact--open topology on a compact space $S$,
the map $\ev \colon \homeo(S) \times S \to S$
defined as $(\varphi, x) \mapsto \varphi(x)$ is continuous.
In fact it is the strongest 
(or \emph{coarsest, pace} Arens, meaning it has the fewest open sets) 
such topology~\cite{Arens, ArensAnnals}.

Now we turn to the proof.

\begin{proof}
  The map $\eta\colon B \to \Omega(E(B))$
  sends an element $b$ to the basic open set
  $U_b$;
  the distinguished elements $0$ to $\varnothing$
  and $1$ to $E(B) \subset E(B)$.

  We claim that $U_a \cap U_b = U_{a \wedge b}$,
  and that $U_a \cup U_b = U_{a\vee b}$.
  Indeed, since ultrafilters are upward closed,
  the inclusions $U_{a\wedge b} \subset U_a$ and $U_b$
  are clear,
  as are the containments $U_{a\vee b} \supset U_a$ and $U_b$.
  Conversely, because ultrafilters are meet closed,
  we see that $U_a \cap U_b \subset U_{a \wedge b}$
  and similarly $U_a \cup U_b \supset U_{a \vee b}$.
  Therefore $\eta$ is a homomorphism.

  For each $b \in B$ not equal to $0$,
  the set $F(b) = \{ a \in B : a \ge b \}$ is a filter which contains $b$.
  By \Cref{lem:ultrafilter},
  we have that $F(b)$ is contained in an ultrafilter.
  By considering the element $a \wedge \lnot b$,
  from this fact it is clear that if $a \ne b$,
  then $U_a \ne U_b$,
  so the map $\eta$ is injective.

  Conversely, suppose that $U \subset E(B)$ is clopen.
  Since the $U_a$ (by definition)
  form a basis for the topology on $E(B)$, since $U$ is open,
  it contains some $U_{a_0}$.
  Since $U_{a_0}$ is closed,
  $U - U_{a_0}$ is open and therefore contains $U_{a_1}$ for some $a_1$.
  Proceeding inductively we have a cover of $U$ by disjoint open sets.
  By compactness of $E(B)$
  and hence of the closed subset $U$,
  finitely many of them cover,
  hence $U$ is equal by the previous observation
  to $U_{a_0 \vee \cdots \vee a_n}$.
  In other words, the map $\eta$ is surjective.

  On the other hand, suppose that $S$ is a Stone space.
  An element of $E(\Omega(S))$ is an ultrafilter on $\Omega(S)$,
  that is, it is a filter,
  i.e.\ a collection of nonempty clopen sets
  which is closed under (finite) intersections and directed unions,
  which is maximal with respect to inclusion.
  Since the clopen subsets of our ultrafilter
  are nonempty,
  we have that this collection $\omega$
  has the \emph{finite intersection property,}
  and hence by compactness of $S$,
  there exists at least one point in the intersection of all $U \in \omega$.

  We claim that in fact there is \emph{exactly} one point in this intersection.
  Indeed, suppose $x$ is contained in $\bigcap_{U \in \omega} U$.
  Since $S$ is Hausdorff,
  for any $y \ne x$,
  there is an open (in fact clopen) neighborhood $V$ of $x$ not containing $y$.
  By maximality of $\omega$, we must have that $V$ is contained in $\omega$,
  and so $y$ is not in $\bigcap_{U \in \omega} U$.

  If $x$ is the unique point of $\bigcap_{U \in \omega} U$,
  let $\epsilon\colon E(\Omega(S)) \to S$
  be the map $\omega \mapsto x$.
  Notice that if $\omega \ne \omega'$,
  then there is a clopen set $U \subset S$
  such that $U \in \omega$ but $S - U$ is in $\omega'$.
  It follows that $\epsilon(\omega) \ne \epsilon(\omega')$.
  It follows that the map $\epsilon$ is injective.

  If $x \in S$ is a point,
  observe that the collection of all clopen subsets of $S$
  containing $x$ is an ultrafilter on $\Omega(S)$:
  it is clearly a filter,
  and it is maximal because $S$ is Hausdorff and totally disconnected.
  It follows that the map $\epsilon$ is surjective.

  Finally, supposing that $U \subset S$ is clopen,
  consider $\epsilon^{-1}(U)$.
  This is the collection of all ultrafilters
  whose unique points of total intersection lie in $U$.
  But this is exactly the set of ultrafilters
  containing the clopen set $U$.
  It follows that $\epsilon$ is continuous.
  To see that is open (and thus a homeomorphism),
  suppose that $U \subset E(\Omega(S))$ is a basic open set,
  say $U = U_V = \{ \omega \in E(\Omega(S)) : V \in \omega\}$.
  It is clear that $\epsilon(U) \subset V$.
  In fact, each point of $V$ is,
  by our proof of surjectivity of $\epsilon$,
  hit by some element of $E(\Omega(S))$
  which is clearly contained in $U$,
  so we conclude that $\epsilon$ is open.

  That the maps $\eta$ and $\epsilon$
  are natural in their argument is the statement
  that for each map (in the correct category)
  $A \to B$ or $S \to T$,
  the following diagrams commute
  \[
    \begin{tikzcd}
      A \ar[r, "\eta_A"] \ar[d] & \Omega(E(A)) \ar[d] \\
      B \ar[r, "\eta_B"] & \Omega(E(B))
    \end{tikzcd}
    \qquad
    \begin{tikzcd}
      E(\Omega(S)) \ar[d] \ar[r, "\epsilon_S"] & S \ar[d] \\
      E(\Omega(T)) \ar[r, "\epsilon_T"] & T.
    \end{tikzcd}
  \]
  This is not hard to see; for example,
  suppose that $f\colon A \to B$ is a homomorphism of Boolean algebras.
  The map $f^*\colon E(B) \to E(A)$
  sends an ultrafilter $\omega$ on $B$ to the ultrafilter
  $f^*\omega = \{ a \in A : f(a) \in \omega\}$ on $A$.
  This is a continuous map,
  and the preimage of $U_a$ under this map
  is the set of ultrafilters
  $\{\omega : f^*\omega \in U_a \} = \{\omega : a \in f^*\omega \}
  = \{\omega : f(a) \in \omega \} = U_{f(a)}$,
  which shows that the left-hand square commutes.
  The right-hand square is similar; we leave it to the reader.
  
  The first part of the ``consequently'' statement
  is just the standard observation
  that the functors $E$ and $\Omega$ carry isomorphisms to isomorphisms
  (being equivalences of categories),
  and in fact yield isomorphisms of the categorical automorphism
  groups under consideration.

  Finally, we prove the ``moreover'' statement.
  Suppose that $K$ is compact and contained in the clopen set $U = U_a$.
  Notice that this containment means that for all $\omega \in K$,
  we have that $a \in \omega$.
  If $\varphi \in \aut(B)$ stabilizes $a$,
  we see that $\varphi^*(U_a) = U_{\varphi(a)} = U_a$,
  so in particular, we have that $\varphi^*(K) \subset U$---in other words,
  the identity neighborhood $V(K,U)$ in the compact--open topology
  on $\homeo(E) \cong \aut(B)$
  contains the identity neighborhood $\Stab(a)$ in the permutation topology.

  Conversely, suppose that $f$ is a homeomorphism of $E$
  contained in the identity neighborhood $V(U_a, U_a)$.
  Then $f^{-1}(U_a) = U_a$ by definition,
  so as a homomorphism of Boolean algebras,
  we have that $f^{-1} \colon \Omega(E) \to \Omega(E)$
  stabilizes $a$.
  That is, we see that the identity neighborhood $\Stab(a)$
  in the permutation topology
  contains the identity neighborhood $V(U_a,U_a)$ in the compact--open topology.
\end{proof}

\subsection{Non-Archimediean Polish groups}
Consider the case when $B$ is countable, $\aut(B)$ is non-Archimedean and Polish.  Since a Boolean algebra $B$ is, in particular, a poset,
we may consider the $1$-skeleton of its geometric realization,
a simplicial graph we will call $\Gamma_B$.
Vertices of $\Gamma_B$ are thus elements of $B$,
and we have a directed edge from $a$ to $b$
(which we assume distinct)
if $a \le b$ in the partial order.
In terms of the Boolean algebra,
this is the case when $a \vee b = b$.
When $B$ is countable, this is a countable graph
and we see immediately that the action of $\aut(B)$
on the vertex set of $\Gamma_B$ is faithful.

This action on a countable set yields a representation
$\aut(B) \to \Sym(\mathbb{N})$,
the group of permutations of a countable set.
Now, the group $\Sym(\mathbb{N})$ is \emph{Polish,}
meaning it has a countable dense subset
and is completely metrizable.

To see this, we claim that the (normal!)
subgroup of finitely supported permutations is dense.
Indeed, given $\Phi$ a permutation of $\mathbb{N}$
and $F \subset \mathbb{N}$ finite,
extend the action of $\Phi$ on $F$
to a bijection $\varphi_F$ of $F \cup \Phi(F)$
by for example setting $\varphi_F(x) = \Phi^{-1}(x)$ if $x \in \Phi(F) - F$.
Extend $\varphi_F$ to all of $\mathbb{N}$
by acting as the identity on the complement.
As we enlarge the set $F$,
the elements $\varphi_F^{-1}\Phi$ approach the identity.
Put another way,
every open neighborhood of $\Phi$ contains
an element of the form $\varphi_F$ in the permutation topology,
so $\Phi$ is in the closure of the finitely supported permutations of $\mathbb{N}$.

Define a metric on $\Sym(\mathbb{N})$ as follows:
if $\Phi$ and $\Psi$ are bijections of $\mathbb{N}$,
let $\delta(\Phi, \Psi) = 2^{-n}$,
where $n$ is the smallest element of $\mathbb{N}$
for which $\Phi(n) \ne \Psi(n)$.
Although this defines a left-invariant metric on $\Sym(\mathbb{N})$,
this metric is not complete~\cite{Cameron}.
Define instead $d(\Phi, \Psi) = \min\{\delta(\Phi,\Psi), \delta(\Phi^{-1},\Psi^{-1})\}$.
This metric is complete: if $\Phi_n$ is a Cauchy sequence,
one can show that $\Phi_n(k)$ and $\Phi_n^{-1}(k)$ stabilize for large $n$,
and therefore define a map
(which turns out to be a bijection)
$\Phi\colon \mathbb{N} \to \mathbb{N}$
by the rule that $\Phi(j) = \lim_{n \to \infty} \Phi_n(j)$.

Let us return to Boolean algebras.
Since the operations of meet and join
are recoverable from the poset structure
(for example $a \wedge b$ is the \emph{greatest lower bound} of $a$ and $b$),
if an element of $\aut(\Gamma_B)$ preserves directed edges,
we claim that actually it comes from an element of $\aut(B)$.
Indeed, since $0$ is the minimum and $1$ the maximum element of $B$,
we have that an element of $\aut(\Gamma_B)$
preserving directed edges preserves $0$, $1$,
$\wedge$ and $\vee$,
hence comes from an automorphism of $B$.
In general (i.e.\ for Boolean algebras satisfying $0 \ne 1$),
an element of $\aut(\Gamma_B)$
must either fix $0$ (and hence fix $1$) or \emph{swap them,}
so we have that $\aut(B)$ is an index-two subgroup of $\aut(\Gamma_B)$
(in general).
In the permutation topology on $\aut(\Gamma_B)$,
we see that $\aut(B) = \Stab(0)$ is open
(hence closed by general theory of topological groups).

Since $\Gamma_B$ is simplicial,
the edge relation on $\Gamma_B$
is described by a subset of the set of unordered pair of vertices of $\Gamma_B$.
The group $\Sym(\mathbb{N})$ acts continuously
on the vertex set of $\Gamma_B$ when given the discrete topology,
hence continuously on the set of unordered pairs.
An element of $\aut(\Gamma_B)$ is precisely an element
of $\Sym(\mathbb{N})$ that preserves the edge relation.
Like all subsets of the (discrete) set of unordered pairs,
the edge relation is closed.
Since $\Sym(\mathbb{N})$ acts continuously,
we see that for each unordered pair $[x,y]$,
the map $\Phi \mapsto [\Phi(x), \Phi(y)]$ is continuous,
hence the subgroup of $\Sym(\mathbb{N})$ preserving the edge relation
is closed in $\Sym(\mathbb{N})$.
Thus we see that $\aut(B)$ is a closed subgroup of $\Sym(\mathbb{N})$.

Closed subspaces of complete metric spaces are complete,
and subspaces of separable metric spaces are separable,
so we see that $\aut(B)$ is again a Polish group,
provided that $B$ is countable.

\subsection{Cut Complexes}

\begin{defn}
Let $B$ be a Boolean algebra
with associated Stone space $E = E(B)$.
A \emph{cut} is
(equivalently)
an unordered pair of elements $[a, \lnot a]$ of $B$
or a partition of $E$ into two disjoint clopen sets $U \sqcup V$.
A cut is \emph{non-peripheral}
if $U$ and $V$
each contain at least two points,
or, equivalently, if neither $a$ nor $\lnot a$ are $0$
or an \emph{atom,}
that is, an immediate successor of $0$ in the partial order.
\end{defn}

We see immediately that if $B$ admits non-peripheral cuts,
then the space $E$ has at least four points.

\begin{defn}
Two cuts $U \sqcup V$ and $U' \sqcup V'$ \emph{cross}
if each of the four pairwise intersections
$U \cap U'$,
$U \cap V'$,
$V \cap U'$
and $V \cap V'$
are nonempty.
In terms of the algebra,
this says that $[a,\lnot a]$ and $[b, \lnot b]$ cross
if none of the elements $a \wedge b$,
$a \wedge \lnot b$,
$\lnot a \wedge b$ or $\lnot a \wedge \lnot b$ are $0$.

If two cuts do not cross, we say that they are \emph{compatible.}

\end{defn}

Observe that peripheral cuts are compatible with \emph{every} cut,
and that if $B$ admits a pair of compatible non-peripheral cuts,
then $E$ has at least five points.

We come now to the main definition.
\begin{defn}
The \emph{complex of cuts} in $B$ (or $E$)
is the simplicial graph $\mathscr{C}(E)$
whose vertices are the non-peripheral cuts
in $B$ (or $E$),
with an edge between distinct cuts whenever they are compatible.
\end{defn}

The group $\aut(B)$ clearly acts on $\mathscr{C}(E)$
continuously.
Our main result is the following.
\begin{thm}\label{maintheorem}
  Suppose $E$ is second countable and has at least five points.
  The group $\aut(B) = \homeo(E)$ is isomorphic
  as a topological group to $\aut(\mathscr{C})$.
\end{thm}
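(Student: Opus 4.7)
My plan decomposes into three parts: showing the natural homomorphism $\iota\colon \aut(B) \to \aut(\mathscr{C})$ is injective, surjective, and a topological isomorphism. The map $\iota$ is clear, since any $\varphi \in \aut(B)$ permutes cuts and preserves compatibility. For injectivity, the key step is to show that $\varphi$ fixing every non-peripheral cut setwise forces $\varphi = \mathrm{id}$. If $\varphi$ fixes $\{b, \lnot b\}$, then $\varphi(b) \in \{b, \lnot b\}$; if $\varphi(b) = \lnot b$, I would take a non-peripheral refinement $b' < b$ (or $b' < \lnot b$ if the $b$-side has only two points), which exists by the $|E| \geq 5$ hypothesis. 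The cut of $b'$ is compatible with that of $b$, so $\varphi(b') \in \{b', \lnot b'\}$, and either option contradicts the strict inequality (for instance, $\varphi(b') = b'$ gives $b' \leq b \wedge \lnot b = 0$). Atoms would then be fixed by expressing each one as a meet of non-peripheral elements (again using $|E| \geq 5$), and coatoms follow by complementation.

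The substantive work lies in surjectivity. Given $\Phi \in \aut(\mathscr{C})$, I would construct an inverse image $\tilde\Phi \in \aut(B)$ by reconstructing the two \emph{sides} of each cut from graph structure alone. For a cut $c = [U, V]$, every compatible $c' \neq c$ has exactly one ``small'' side strictly contained in a side of $c$, partitioning the link of $c$ minus $c$ into $L_U(c) \sqcup L_V(c)$. Since a small side in $U$ is disjoint from one in $V$, the induced subgraph is a join $L_U * L_V$. The central lemma to establish is that this is the \emph{unique} nontrivial join decomposition of the link, up to swapping factors, so $\aut(\mathscr{C})$ acts canonically on the unordered pair of sides of each cut. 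Making a single coherent choice of ``favored side'' for every cut then lifts $\Phi$ to an action on the non-peripheral, non-atomic elements of $B$; the compatibility relation, combined with the side-pairing, records the containments $a \leq b$ well enough to verify this lift preserves meets and joins. Extending to atoms (via their expression as meets) and to coatoms yields $\tilde\Phi \in \aut(B)$.

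The topological statement is largely automatic. Continuity of $\iota$ is immediate, since any $\varphi$ stabilizing $b$ stabilizes the cut $\{b, \lnot b\}$. Both $\aut(B)$ and $\aut(\mathscr{C})$ are non-Archimedean Polish (as explained in the preceding subsection, since $B$ and the vertex set of $\mathscr{C}$ are countable), so a continuous bijective homomorphism between them is automatically a topological isomorphism by the open mapping theorem for Polish groups.

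The main obstacle is uniqueness of the join decomposition of the link of $c$ as $L_U * L_V$. My approach would be to show that when both $|U|, |V| \geq 3$, each factor contains enough pairs of crossing cuts that its complement graph is connected, forcing any alternative join partition to keep $L_U$ and $L_V$ intact. Boundary cases where one side of $c$ has only two points---so that one factor is empty and the link is not a nontrivial join---require a separate graph-theoretic characterization of such ``small'' cuts, and it is precisely in handling these that the five-point hypothesis becomes tight.
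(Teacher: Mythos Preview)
Your approach is genuinely different from the paper's, and the core idea is sound. The paper never lifts $\Phi$ directly to $B$; instead it reconstructs the \emph{points} of $E$. For finite $E$ it characterizes outermost cuts combinatorially (via valence in the adjacency graph of a pants decomposition) and characterizes \emph{peripheral pairs} (compatible cuts whose ``middle'' region is a singleton) via the component count of ${(L(\gamma)\cap L(\eta))}^\perp$; from these it reads off a bijection $E \to E'$. For infinite $E$ it builds a \emph{principal spherical exhaustion}, shows $\Phi$ carries it to one on $E'$, invokes the finite case on each stage, and assembles the resulting bijections into a homeomorphism via an inverse-limit description of $E$. Your join-of-links route, by contrast, goes straight for an automorphism of $B$: for a non-outermost $c=[U,V]$ the two components of $L(c)^\perp$ are canonically labeled by the sides, so $\Phi$ already induces a bijection $\{U,V\}\to\{\text{sides of }\Phi(c)\}$ with no choice needed, and a short argument (using that $c'\in L_V(c)$ and $c\in L_{U'}(c')$ simultaneously forces the matched sides to satisfy the corresponding containment) shows this lift is order-preserving. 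That, plus your appeal to the open mapping theorem for Polish groups in place of the paper's explicit openness Lemma, makes your strategy in some ways more efficient.

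The real gap is exactly where you flag it but do not resolve it: outermost cuts. When $|U|=2$ one factor of the join is empty, $L(c)^\perp$ is connected, and your side-identification mechanism yields nothing. You defer this to ``a separate graph-theoretic characterization,'' but for $|E|=5$ \emph{every} non-peripheral cut is outermost, so your main engine is vacuous there; for $|E|=6$ only the $3+3$ cuts are non-outermost and you cannot express a two-element clopen set as a meet of such sides. More generally, even for large or infinite $E$ you must still define $\tilde\Phi$ on two-element clopen sets before you can recover atoms as meets. This is precisely the work the paper's peripheral-pair and outermost-cut lemmas do, and some analogue of them (or an ad hoc treatment of the small cases together with a recognition of ``the two-point side'' for outermost cuts) is not optional for your plan to go through.
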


As already mentioned,
the complex $\mathscr{C}(E)$
is empty when $E$ has three or fewer points.
When $E$ is a four-point set,
the complex $\mathscr{C}(E)$
is a set of three points (with no edges),
and the map $\homeo(E) \to \aut(\mathscr{C})$
is the exceptional map $S_4 \to S_3$
whose kernel is the Klein 4-group
generated by the products of disjoint transpositions.
When $E$ is a five-point set,
the graph $\mathscr{C}(E)$ is the Petersen graph
(cuts are in one-to-one correspondence with two-element
subsets of five points, and edges between cuts
correspond to disjointness of the two-element subsets),
which is well-known to have automorphism group $S_5$.

Recall that in the classification of infinite-type surfaces~\cite{Richards1963OnTC},
a fundamental invariant is the nested triple of Stone spaces $E_n(S)\subseteq E_g(S)\subseteq E(S)$,
where $E_n(S)$ is the space of ends accumulated by crosscaps
and $E_g(S)$ is the space of ends accumulated by genus.
Motivated by this example, we make the following definition.
\begin{defn}
  A \emph{Stone space system} is a properly nested collection of Stone spaces
  $E_n\subseteq E_{n-1}\subseteq \cdots \subseteq E_2\subseteq E_1$ for some $n \ge 2$.
  A \emph{cut} of a Stone space system is a cut of $E_1$.
  A \emph{homeomorphism} of Stone space systems is a homeomorphism of $E_1$
  which preserves each $E_k$ for $2\leq k\leq n$.
  The number $n$ is called the \emph{length} of the Stone space system.
\end{defn}

When is a cut of a Stone space system ``non-peripheral''?
There are two definitions we consider.

\begin{defn}
  Let $E=(E_1,\ldots, E_n)$ be a Stone space sytem.
  A cut of $E$ is \emph{weakly non-peripheral}
  if each part of the partition either contains at least two points or at least one point of $E_2$.
  The \emph{weak complex of cuts} $\mathscr{C}_w(E)$ is the complex whose vertices are weakly non-peripheral cuts
  and whose edges correspond to compatibility.
\end{defn}

Intuitively, we think of the surface $S$ obtained by removing $E_1$ from a sphere
and gluing infinite rays of handles or crosscaps at each point of $E_2$.
Then a weakly non-peripheral cut of $E$ is roughly analogous to a simple closed curve of $S$
which is not contractable or homotopic to a puncture.

We can immediately see that the main result does \emph{not}
extend to the weak cut complex for Stone space systems with length at least 3.
Consider the Stone space system $E=(E_1,E_2,E_3)$ where
\[E_1=\{a,b,c,d,e\}\]
\[E_2=\{a,b\}\]
\[E_3=\{a\}.\]

The cuts $A= \{a\} \sqcup \{b,c,d,e\}$ and $B=\{b\} \sqcup \{a,c,d,e\}$ are both weakly non-peripheral.
Both of these cuts are compatible with all other cuts.
Hence, there exists an automorphism of the weak complex of cuts
$\phi\colon \mathscr{C}_w(E)\to \mathscr{C}_w(E)$ such that $\phi(A)=B$.
But a homeomorphism of $E_1$ which induces $\phi$ must take $a$ to $b$,
and hence fails to be a homeomorphism of the Stone space system.

In fact, the result fails even for the weak complex of \emph{pairs} of Stone spaces.
\begin{lem}
  Let $E_1$ be a Stone space with at least five points, let $k\in E_1$ be an isolated point,
  and let $E$ be the Stone space system $(E_1,\{k\})$.
  Then $\aut(\mathscr{C}_w(E))\cong \aut(\mathscr{C}(E_1))$.
\end{lem}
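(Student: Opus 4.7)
The plan is to observe that $\mathscr{C}_w(E)$ is obtained from $\mathscr{C}(E_1)$ by adjoining a single additional vertex adjacent to every other vertex. Once this is established, the isomorphism of automorphism groups follows by restricting or extending across the extra vertex.

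First I would identify the vertex sets. Because $E_2=\{k\}$, a cut $U\sqcup V$ of $E_1$ is weakly non-peripheral exactly when each part either has at least two points or contains $k$. The cuts in which both parts have at least two points are precisely the non-peripheral cuts of $E_1$, i.e.\ the vertices of $\mathscr{C}(E_1)$. The only additional weakly non-peripheral cut is one in which some part is the singleton $\{k\}$; since $k$ is isolated, $\{k\}$ is clopen, so $v_k:=\{k\}\sqcup(E_1\setminus\{k\})$ is a legitimate cut (the other side having at least four points). Thus the vertex set of $\mathscr{C}_w(E)$ is the vertex set of $\mathscr{C}(E_1)$ together with the one extra vertex $v_k$.

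Next I would show that $v_k$ is a cone point, that is, it is compatible with every other vertex. For any other cut $U'\sqcup V'$, the point $k$ belongs to exactly one of $U', V'$, so one of the intersections $\{k\}\cap U'$, $\{k\}\cap V'$ is empty, and hence $v_k$ does not cross $U'\sqcup V'$. The main step is then to show that $v_k$ is the \emph{only} cone point. Let $W=W_1\sqcup W_2$ be any vertex with $W\neq v_k$; then $W$ is non-peripheral in $E_1$, so each $W_i$ is a clopen subspace of $E_1$ with at least two points. Being a Stone space with more than one point, each $W_i$ admits a non-trivial clopen partition $W_i=W_i'\sqcup W_i''$ with both pieces non-empty. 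The cut $(W_1'\cup W_2')\sqcup(W_1''\cup W_2'')$ has two parts of size at least two, so is non-peripheral, and its four intersections with $W$ are exactly $W_1', W_1'', W_2', W_2''$, all non-empty; hence it crosses $W$, witnessing that $W$ is not a cone point.

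The conclusion follows formally. Since cone points are preserved under graph automorphisms, every $\phi\in\aut(\mathscr{C}_w(E))$ fixes $v_k$, and hence restricts to an automorphism of the induced subgraph on the remaining vertices, which is exactly $\mathscr{C}(E_1)$ (removing $v_k$ deletes no edges among the remaining vertices). Conversely, any automorphism of $\mathscr{C}(E_1)$ extends to $\mathscr{C}_w(E)$ by declaring it to fix $v_k$; since $v_k$ is adjacent to every other vertex, this extension respects the edge relation. These two constructions are mutually inverse group homomorphisms, so $\aut(\mathscr{C}_w(E))\cong\aut(\mathscr{C}(E_1))$. The only real content is the uniqueness of the cone point in Step three; the rest is bookkeeping.
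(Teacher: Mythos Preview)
Your proof is correct and follows essentially the same approach as the paper: identify the single extra vertex $v_k$, show it is a cone point, argue it is the unique cone point so that every automorphism fixes it, and conclude by restriction/extension. The paper asserts the uniqueness step (``No other vertices have this property'') without justification, whereas you supply an explicit crossing cut, so your argument is in fact more complete.
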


In particular, suppose $E_1$ is a discrete finite set with $n\geq 5$ points.
Then $\aut(\mathscr{C}_w(E))\cong \aut(\mathscr{C}(E_1))\cong S_n$,
but the homeomorphism group of $E$ is isomorphic to $S_{n-1}$.

\begin{proof}
  Let $\kappa$ be the cut $\{k\} \sqcup E_1 - \{k\}$.
  Then $\kappa$ is weakly-non-peripheral by definition.
  Moreover, all weakly-non-peripheral cuts except for $\kappa$  are non-peripheral as cuts in $E_1$.
  Hence, $\mathscr{C}(E_1)$ is the full subgraph of $\mathscr{C}_w(E)$ obtained by removing the vertex $\kappa$.
  
  Every cut is compatible with $\kappa$, hence $\kappa$ is adjacent to every other vertex of $\mathscr{C}_w(E)$.
  No other vertices have this property: thus $\kappa$ is fixed by the automorphism group of $\mathscr{C}_w(E)$,
  so removing $\kappa$ does not change the automorphism group.
\end{proof}

\begin{defn}
  Let $E=(E_1,\ldots, E_n)$ be a Stone space system.
  A cut of $E$ is \emph{strongly non-peripheral} if each part of the partition contains at least two elements of $E_n$.
  The \emph{strong cut-complex} $\mathscr{C}_s(E)$ is the full subgraph of $\mathscr{C}(E_1)$ comprising the strongly non-peripheral vertices.
\end{defn}

Once again, the main result does not generalize to strong cut complexes for Stone space systems of length at least 3.
Consider the system $E=(E_1, E_2, E_3)$ with
\[E_1=\{a,b,c,d,e,f,g\}\]
\[E_2=\{a,b,c,d,e,f\}\]
\[E_3=\{a,b,c,d,e\}.\]

Consider the homeomorphism $\phi\colon E_1\to E_1$ given by
\[\phi(f)=g,\]
\[\phi(g)=f,\]
\[\phi(x)=x, \quad\mathrm{for }\quad x\in E_3.\]
Note that $\phi$ is \emph{not} a homeomorphism of the Stone space system,
since it swaps an element of $E_2$ with an element of $E_1\backslash E_2$.
However, $\phi$ induces an automorphism on the strong cut-complex.
To see this, note that the strongly non-peripheral property depends only on the elements of $E_3$,
which are all fixed pointwise by $\phi$.
Hence, a cut $X$ is strongly non-peripheral if and only if $\phi(X)$ is strongly non-peripheral.

\begin{question}\rm
  If $E=(E_1,E_2)$ is a Stone space pair,
  do we have that $\homeo(E)\cong \aut(\mathscr{C}_s(E))$?
\end{question}

\begin{lem}
  The graph $\mathscr{C}(E)$ is connected
  if $E$ has at least five points.
  Moreover if $E$ is finite,
  then $\mathscr{C}(E)$ has diameter at most four,
  while if $E$ is infinite,
  $\mathscr{C}(E)$ has diameter two.
\end{lem}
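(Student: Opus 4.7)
The plan is to reduce the entire statement to a single construction: given any two crossing non-peripheral cuts, produce a third non-peripheral cut compatible with both. This simultaneously establishes connectedness and the sharper diameter bound $\le 2$, which is already stronger than the ``at most four'' bound claimed in the finite case.

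First I would dispose of the easy case: if the cuts $U \sqcup V$ and $U' \sqcup V'$ are equal or compatible, they are at graph distance at most $1$. Otherwise they cross, and the four pairwise intersections $A_1 = U \cap U'$, $A_2 = U \cap V'$, $A_3 = V \cap U'$, $A_4 = V \cap V'$ are all nonempty clopen sets partitioning $E$.

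The central observation is that for any index $i$, the partition $A_i \sqcup (E \setminus A_i)$ is a cut compatible with both originals, because $A_i$ lies inside a single side of each of them. For this cut to be a vertex of $\mathscr{C}$ I only need $|A_i| \ge 2$; the complement then automatically has at least three points, one from each of the other three pieces. Since $|E| \ge 5$ and the $A_i$ partition $E$ into four nonempty parts, pigeonhole produces such an index. This gives distance at most $2$ between any two cuts in $\mathscr{C}$, which covers both the ``at most four'' statement when $E$ is finite and the ``at most two'' statement when $E$ is infinite.

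To finish the infinite case by confirming the diameter is \emph{exactly} $2$, I would exhibit two crossing non-peripheral cuts. Any infinite Stone space admits a clopen partition $E = A_1 \sqcup A_2 \sqcup A_3 \sqcup A_4$ into four nonempty pieces, obtained by iteratively splitting with basic clopens using the fact that any two points of a Stone space are separated by a clopen; then $(A_1 \cup A_2) \sqcup (A_3 \cup A_4)$ and $(A_1 \cup A_3) \sqcup (A_2 \cup A_4)$ are non-peripheral and cross. The only genuine obstacle throughout the argument is ensuring non-peripherality of the intermediate cut $A_i \sqcup (E \setminus A_i)$, which is precisely the role of the hypothesis $|E| \ge 5$ in forcing some piece to have at least two points.
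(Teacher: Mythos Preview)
Your argument is correct and in fact sharper than the paper's. The paper treats the finite and infinite cases separately: in the finite case it first replaces each of the two given cuts by a compatible cut whose smaller side has exactly two points, and then if those auxiliary cuts still cross it uses the three-point union of their small sides to produce a common neighbor, yielding a path of length at most four; in the infinite case it picks the one intersection piece that must be infinite and uses that as the intermediate cut. Your single pigeonhole observation---that among the four nonempty intersection pieces at least one has size $\ge 2$ whenever $|E|\ge 5$---handles both cases at once and shows the diameter is at most two even for finite $E$, improving the paper's bound of four. You also explicitly exhibit a crossing pair to confirm the diameter is \emph{exactly} two when $E$ is infinite, a step the paper's proof omits. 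The paper's detour through size-two cuts buys nothing here; your approach is strictly cleaner.
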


\begin{proof}
  Suppose that $E$ is finite but has at least five points.
  Any choice $U_\gamma$ of two points from $E$
  determines a cut $U_\gamma \sqcup (E - U_\gamma)$,
  and any cut $\gamma$ is compatible with a cut $\gamma'$,
  one of whose sets $U_{\gamma'}$ has size two.
  So beginning with cuts $\gamma$ and $\eta$,
  replace them with compatible cuts $\gamma'$ and $\eta'$
  respectively whose smaller subset $U_{\gamma'}$ and $U_{\eta'}$ has size two.
  If $\gamma'$ and $\eta'$ are compatible (or equal),
  then we are done.
  If not, then $U_{\gamma'} \cup U_{\eta'}$ has size three,
  and since $E$ has at least five elements,
  this determines a cut compatible with both $\gamma'$ and $\eta'$.
  This proves that $\mathscr{C}(E)$ has diameter at most four when $E$ is finite.

  Supposing instead that $E$ is infinite,
  note that at least one of the subsets determined by a cut $\gamma$
  is infinite.
  If $\gamma = U \sqcup V$ and $\eta = U' \sqcup V'$
  are cuts that cross,
  then each of the four intersections is nonempty
  and at least one must be infinite.
  That intersection determines a cut,
  since its complement contains at least three points
  (one for each of the remaining pairwise intersections),
  and this cut is compatible with both $\gamma$ and $\eta$.
  This proves that $\mathscr{C}(E)$ has diameter two.
\end{proof}

\subsection{Cantor sets and subcomplexes}
In the study of infinite-type surfaces, the curve complex has diameter two.
However, there are natural full subgraphs which have infinite diameter~\cite{AFP2017}.
One could ask whether such a subcomplex exists for the cut complex.
The next lemma shows that we cannot hope to get such a subcomplex for arbitrary Stone spaces.

\begin{lem}
  Let $K$ be a Cantor set, and let $G$ be a nonempty,
  $\homeo(K)$-invariant full subgraph of $\mathscr{C}(K)$.
  Then $G$ has diameter 2.
\end{lem}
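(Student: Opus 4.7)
The plan is to reduce to the previous lemma by showing that $G$ must in fact equal all of $\mathscr{C}(K)$. The strategy has three steps.

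First, I would establish that every nonempty clopen subset $U\subseteq K$ is itself homeomorphic to a Cantor set. Since $K$ is perfect (has no isolated points) and $U$ is open in $K$, any point $x\in U$ has no isolated neighborhood in $K$, and hence none in $U$ either, so $U$ is perfect. Being compact, Hausdorff, totally disconnected, metrizable and perfect, $U$ is a Cantor set by the standard Brouwer characterization. In particular, every non-trivial partition $K = U\sqcup V$ into clopen sets automatically has both parts infinite, so every such partition is a non-peripheral cut.

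Second, I would show that $\homeo(K)$ acts transitively on the vertex set of $\mathscr{C}(K)$. Given non-peripheral cuts $K = U\sqcup V$ and $K = U'\sqcup V'$, step one says $U, V, U', V'$ are all Cantor sets. Choose homeomorphisms $\varphi_1\colon U\to U'$ and $\varphi_2\colon V\to V'$; because $U\sqcup V$ and $U'\sqcup V'$ are clopen decompositions, $\varphi_1\sqcup \varphi_2$ glues to a homeomorphism $\varphi\colon K\to K$ sending the first cut to the second.

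Third, I would conclude. Since $G$ is nonempty, it contains some vertex $v$. By $\homeo(K)$-invariance, $G$ contains the entire $\homeo(K)$-orbit of $v$, which by step two is every vertex of $\mathscr{C}(K)$. Since $G$ is a full subgraph, it also contains every edge of $\mathscr{C}(K)$ between its vertices. Hence $G = \mathscr{C}(K)$, and the previous lemma gives $\operatorname{diam}(G) = 2$.

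The only real substantive step is the first — the observation that clopen subsets of the Cantor set are Cantor sets, which is what drives homogeneity of the vertex action. Once that is in hand, there is no genuine obstacle; transitivity together with fullness collapses $G$ to the entire cut complex, and the diameter-two conclusion is inherited from the prior lemma.
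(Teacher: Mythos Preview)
Your argument is correct, but it takes a different route from the paper's proof. You observe that $\homeo(K)$ acts transitively on vertices of $\mathscr{C}(K)$ (since every proper nonempty clopen set is again a Cantor set), so any nonempty invariant full subgraph must be all of $\mathscr{C}(K)$; the diameter statement then follows immediately from the earlier lemma on $\mathscr{C}(E)$ for infinite $E$. The paper instead argues directly inside $G$ without ever identifying $G$ with $\mathscr{C}(K)$: for the lower bound it exhibits a specific homeomorphism $f$ so that a given vertex $x$ and $f(x)$ cross, and for the upper bound, given non-adjacent $x = A \sqcup (K\setminus A)$ and $y = B \sqcup (K\setminus B)$, it picks a homeomorphism with $f(A) = A\cap B$ and checks that $f(x)\in G$ is a common neighbor. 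Your reduction is shorter and makes the lemma a corollary of the transitivity of the vertex action; the paper's argument is self-contained and does not invoke the earlier diameter result, instead essentially reproving the ``at most two'' half in a $G$-aware way. Both hinge on the same structural fact about clopen subsets of $K$, but you use it to get transitivity on \emph{all} cuts, while the paper uses it to land on one \emph{particular} cut.
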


\begin{proof}
  First, we show $G$ has diameter at least two.
  Since $G$ is non-empty, there exists a vertex $x\in V(G)$ corresponding to the non-peripheral cut $A\sqcup (K\backslash A)$ of $K$.
  Since $G$ is $\homeo(K)$-invariant, $G$ contains $f(x)$ for all homeomorphisms $f\in \homeo(K)$.

  $A$ is a nonempty proper clopen subset of a Cantor set, so $A$ is also homeomorphic to $K$.
  If we represent $K$ as the set of all infinite binary sequences,
  we may assume without loss of generality that $A$ is the set of binary sequences whose first term is 0.

  Let $f\colon K\to K$ be the homeomorphism that sends the sequence $(a_1,a_2,a_3,a_4,\ldots)$ to  the sequence $(a_2,a_1,a_3,a_4,\ldots)$.
  Observe that $x$ and $f(x)$ cross: $A\cap f(A)$ is the set of sequences beginning with two 0s;
  $A\cap (K\backslash f(A))$ is the set of sequences beginning with a zero followed by a one.
  Hence, $d_G(x,f(x))\geq 2$.

  Now we show $G$ has diameter at most two.
  Suppose $x=A\sqcup (K\backslash A)$ and $y=B\sqcup (K\backslash B)$ are two vertices of $G$.
  If $x$ and $y$ are adjacent, then we are done.
  Otherwise, $A\cap B$ is a nonempty clopen proper subset of $K$
  different from $A$ or $B$,
  and hence there exists a homeomorphism $f\colon K\to K$ such that $f(A)=A\cap B$.
  Thus $f(x)=(A\cap B)\sqcup (K\backslash (A\cap B))$ is also a vertex of $G$, and $f(x)$ is adjacent to both $x$ and $y$.
\end{proof}

\section{Pants Decompositions}

Our proof of \Cref{maintheorem} is motivated
by the analogy between $E$
and the end spaces of infinite-type surfaces,
in particular the papers~\cite{Valdez1,Valdez2}.
Motivated by pants decompositions of surfaces, we introduce the following definition.

\begin{defn}\rm
Let $\{\gamma_n\}$ be a countable collection of \emph{non-peripheral} cuts.
We say that $\Gamma = \{\gamma_n\}$
is a \emph{pants decomposition} of $E$
if it satisfies the following conditions.
\begin{enumerate}
\item Any two cuts $\gamma_i$ and $\gamma_j$ in $\Gamma$ are compatible.
\item If $\gamma$ is a cut \emph{not} in $\Gamma$ but in $\mathscr{C}(E)$,
  then there exists some cut $\gamma_j \in \Gamma$
  which crosses $\gamma$.
\item Moreover the collection of $j$ such that $\gamma$ crosses $\gamma_j$
  is finite.
\end{enumerate}

\end{defn}

Since $\mathscr{C}(E)$ is a simplicial graph,
we may think of it as a (flag) simplicial complex
by adding a simplex whenever its $1$-skeleton is present.
The first two properties say that $\Gamma$
is the set of vertices of a maximal simplex in $\mathscr{C}(E)$.

If $E$ is finite, the third condition is vacuous,
since $\mathscr{C}(E)$ is a finite graph.
If $E$ is infinite, we have the following result.

\begin{lem}
  Suppose that $E$ is second countable (or equivalently that $B$ is countable)
  and that $E$ has at least four elements.
  Pants decompositions of $E$ exist.
\end{lem}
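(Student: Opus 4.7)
Split into cases on whether $E$ is finite or infinite. If $E$ is finite with $|E| \geq 4$, then $\mathscr{C}(E)$ is a finite nonempty graph and condition (3) is vacuous, so I would take $\Gamma$ to be the vertex set of any maximal simplex of $\mathscr{C}(E)$; existence is routine.

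For $E$ infinite, I would build $\Gamma$ from the edges of a countable simplicial tree $T$ whose set of ends is identified with $E$. Construct $T$ inductively using a countable basis $\{V_k\}$ of clopen sets in $E$: at step $k$, refine the current finite tree $T_{k-1}$ by splitting each leaf whose labelled clopen set properly meets $V_k$ along its intersection with $V_k$, producing two new leaves and a new internal vertex of degree three. After step $k$, the set $V_k$ is a union of leaf-labels of $T_k$. Set $T = \bigcup_k T_k$ and let $\Gamma$ be the non-peripheral cuts of $E$ arising from the edges of $T$---edges can give peripheral cuts, but only when a leaf-label on one side is a singleton, which happens for isolated points of $E$. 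Condition (1) is automatic since cuts coming from distinct edges of a tree are pairwise compatible.

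For condition (2), suppose $\gamma = A \sqcup B$ is a non-peripheral cut not in $\Gamma$. By compactness and the basis-refining property, at some stage $k$ the set $A$ is a union of leaf-labels of $T_k$. Consider the Steiner subtrees $S_A, S_B \subset T_k$---the minimal subtrees containing the $A$-colored, respectively $B$-colored, leaves. If $S_A$ and $S_B$ were disjoint, then any edge between them would give the cut $\gamma$ (which is non-peripheral, hence in $\Gamma$), a contradiction. So $S_A$ and $S_B$ share at least one edge, and any shared edge has both sides containing leaves of both colors, i.e., crosses $\gamma$; furthermore any crossing edge must be non-peripheral (since a peripheral cut with a singleton side is compatible with every cut of $E$), hence lies in $\Gamma$. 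For condition (3), the crossing edges all lie in $S_A \cap S_B$, a finite subtree of $T_k$, and no edge added at a later stage can cross $\gamma$ because one of its sides is contained in a single monochromatic leaf-label of $T_k$.

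The main obstacle is the Steiner-tree argument for condition (2): verifying that disjointness of $S_A$ and $S_B$ in $T_k$ already forces $\gamma$ to be an edge-cut, so that any $\gamma \notin \Gamma$ must have a crossing edge. Once this is in place, condition (3) and the rest of the verification follow from the same analysis.
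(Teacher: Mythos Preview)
Your approach is correct and takes a genuinely different route from the paper's. The paper first exhibits an explicit pants decomposition of the middle-thirds Cantor set $\mathscr{C}$ (the cuts $\gamma_{\vec s}$ indexed by finite strings in $\{0,2\}$), then embeds an arbitrary second-countable $E$ as a closed subset of $\mathscr{C}$ and restricts those cuts to $E$, discarding the ones that become peripheral. The verification of conditions (2) and (3) in the paper goes through the prefix structure of strings.

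Your construction is intrinsic: you manufacture the tree directly from a countable clopen basis of $E$, so no embedding step is needed and the finite case is dispatched separately. The Steiner-subtree argument replaces the paper's prefix reasoning; it is a bit more delicate but more conceptual. One small point worth making explicit when you write it up: the implication ``$S_A$ and $S_B$ not disjoint $\Rightarrow$ they share an edge'' uses that every internal vertex of $T_k$ has degree $2$ or $3$. If $v \in S_A \cap S_B$ were an internal vertex with only a single shared incidence, then at least two of the (at most three) subtrees at $v$ would contain $A$-leaves and at least two would contain $B$-leaves, forcing an overlap and hence a shared edge; and $v$ cannot be a leaf, since a leaf lies in exactly one of $S_A$, $S_B$. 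Once that is said, your verification of (2) and (3) goes through cleanly.

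What each buys: the paper's route makes the Cantor case completely concrete and then gets the general case essentially for free via Stone duality, at the cost of a second pass checking that restriction preserves the three conditions. Your route avoids the auxiliary embedding and treats all second-countable $E$ uniformly, at the cost of a slightly more careful tree argument. Both produce the same kind of pants decomposition (dyadic, coming from the edges of a locally finite tree).
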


\begin{proof}
  We begin with a particular example.
  Recall the construction of the standard middle-thirds Cantor set $\mathscr{C}$:
  one begins with the unit interval $[0,1]$ and then
  at the $k$\textsuperscript{th} level of the construction for $k \ge 1$,
  a number $0 \le x \le 1$ remains in the set
  provided none of the first $k$ digits after the decimal of $x$ expressed as a number in base 3
  are $1$. (We represent $1$ as $0.\overline{2}$.)
  There are $2^k$ sequences of length $k$ drawn from the alphabet $\{0,2\}$;
  each choice of a sequence $s_1\ldots s_k$ determines a cut
  $\gamma_{s_1\ldots s_k} = U_{s_1\ldots s_k} \sqcup V$,
  where $U_{s_1\ldots s_k}$ is the clopen subset of $\mathscr{C}$ comprising those $x \in \mathscr{C}$
  whose first $k$ digits expressed as a number in base 3 are $0.s_1\ldots s_k$,
  and $V$ is its complement in $\mathscr{C}$.

  The collection of finite sequences in the alphabet $\{0,2\}$ is countable,
  and cuts in $\mathscr{C}$ corresponding to distinct finite sequences are compatible;
  we claim that the collection $\Gamma = \{\gamma_{\vec s} : \vec s = s_1\ldots s_k,\ s_i \in \{0,2\}\}$
  is a pants decomposition of $\mathscr{C}$.
  Let $\gamma = U \sqcup V$ be a cut not in $\Gamma$.
  Since the sets $U_{s_1\ldots s_k}$ form a neighborhood basis for $\mathscr{C}$,
  using compactness we may write $U$ and $V$ each as a disjoint union of finitely many of these sets,
  say $U = U_1 \sqcup \cdots \sqcup U_n$ and $V = V_1 \sqcup \cdots V_m$.
  Any cut $\gamma_{\vec s}$ for which the sequence $\vec s = s_1\ldots s_k$ has a prefix equal
  to the string associated to some $U_i$ or $V_j$ will be compatible with $\gamma$.
  Since there is a bound to the length of strings appearing in the lists associated to $U_1,\ldots,U_n$
  and $V_1,\ldots,V_m$ and $U \sqcup V$ is a cut so that every $\vec s$ longer than that bound has
  such a prefix,
  there are only finitely many $\gamma_{\vec s}$ which may cross $\gamma$.
  Thus $\Gamma$ is a pants decomposition of the Cantor set.

  Now suppose that $E$ is a second-countable Stone space.
  By Stone duality, $E$ is (homeomorphic to) a closed subset of $\mathscr{C}$,
  and each cut $\gamma \in \Gamma$ restricts to a cut $\bar\gamma = \bar U \sqcup \bar V$,
  where $\bar U = E \cap U$ and $\bar v = E \cap V$.
  Now, these cuts may not all be non-peripheral.
  We restrict ourselves to
  $\Gamma_E = \{ \bar\gamma : \bar\gamma \text{ is nonperipheral and } \gamma \in \Gamma \}$.
  It is clear that this collection of cuts is countable and pairwise compatible.
  We claim that it is a pants decomposition of $E$.

  Now, by the definition of the subspace topology,
  \emph{every} cut $\bar U \sqcup \bar V$ in $E$ is the restriction $\bar\gamma$
  of a cut $\gamma = U \sqcup V$ in $\mathscr{C}$.
  A nonperipheral cut in $E$ will belong to $\Gamma_E$
  if and only if it is equal to $\bar\gamma$ for some cut $\gamma$ in $\Gamma$.
  (It may \emph{also} be equal to $\bar\gamma'$ where $\gamma'$ is not in $\Gamma$.)
  Therefore if a cut does not belong to $\Gamma_E$,
  then for every expression of that cut as $\bar\gamma$,
  we have that the cut $\gamma$ is not in $\Gamma$.
  Fixing such a cut $\gamma = U \sqcup V$,
  write $U = U_1 \sqcup \cdots U_n$,
  where each $U_i$ is $U_{\vec {s}_i}$ for some finite sequence $\vec {s}_i$.
  Since $\bar\gamma$ is nonperipheral,
  at least two of these $U_i$ contain points of $E$, say $\vec {s}_i$ and $\vec {s}_j$
  and since $\gamma$ cannot be chosen from $\Gamma$,
  if ${\vec s}$ is a common prefix with both $\vec s_i$ and $\vec s_j$,
  then we may assume without loss of generality that $U_{\vec s}$ contains a point of $\bar V$.
  By paying similar attention to $V$,
  we may choose such a $\vec s$ such that the complement of $U_{\vec s}$
  contains at least two points of $\bar V$,
  so that $\bar\gamma_{\vec s}$ is nonperipheral as a cut of $E$ hence belongs to $\Gamma_E$
  and crosses $\bar\gamma$.
  This establishes the second property of pants decompositions.
  For the third, note that if a cut crosses $\bar\gamma_{\vec s} \in \Gamma_E$,
  then \emph{every} representation of that cut as $\bar\gamma$
  for $\gamma$ a cut in $\mathscr{C}$ will cross $\gamma_{\vec s}$,
  so since $\Gamma$ is a pants decomposition,
  the set of $\bar\gamma_{\vec s}$ crossing a given cut $\bar\gamma$ not in $\Gamma_E$ is finite,
  since it is contained in the finite set of cuts in $\Gamma$ crossing $\gamma$,
  even though \emph{a priori} infinitely many choices of such a $\gamma$ are possible.
\end{proof}

\begin{defn}\rm
If $\Gamma$ is a pants decomposition, two cuts $\gamma_i$ and $\gamma_j$ of $\Gamma$ are \emph{adjacent} if there exists a cut $\gamma$ in $\mathscr{C}(E)$
such that $\gamma$ crosses $\gamma_i$ and $\gamma_j$
but no other cut in $\Gamma$.

The \emph{adjacency graph} of $\Gamma$,
written $A(\Gamma)$, is the simplicial graph with vertex set $\Gamma$,
with an edge between two vertices whenever the corresponding
cuts $\gamma_i$ and $\gamma_j$
are adjacent in $\Gamma$.

\end{defn}
Because adjacency is preserved by isomorphisms,
we have the following result.

\begin{lem}
  Suppose $\Phi\colon \mathscr{C}(E) \to \mathscr{C}(E')$
  is an isomorphism.
  Then for any pants decomposition $\Gamma$
  of $E$,
  $\Phi(\Gamma)$ is a pants decomposition of $E'$
  and $\Phi$ induces an isomorphism of graphs $A(\Gamma) \to A(\Phi(\Gamma))$.
  \hfill\qedsymbol{}
\end{lem}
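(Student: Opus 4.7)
The plan is to observe that the entire statement reduces to unpacking what it means for $\Phi$ to be a graph isomorphism of $\mathscr{C}(E) \to \mathscr{C}(E')$: namely that $\Phi$ is a bijection of vertex sets carrying edges to edges in both directions. Since the vertex set of $\mathscr{C}$ is exactly the set of non-peripheral cuts and the edge relation is exactly compatibility, $\Phi$ is a bijection of non-peripheral cuts that preserves and reflects compatibility, hence preserves and reflects its complementary relation of \emph{crossing} among non-peripheral cuts. From here each of the three pants-decomposition axioms transports along $\Phi$ mechanically.

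More concretely, I would verify the three conditions in order. For condition (1), pairwise compatibility of $\Phi(\Gamma)$ follows immediately from the fact that $\Phi$ sends edges of $\mathscr{C}(E)$ to edges of $\mathscr{C}(E')$. For condition (2), given a non-peripheral cut $\gamma' \in \mathscr{C}(E') \setminus \Phi(\Gamma)$, apply $\Phi^{-1}$: the cut $\Phi^{-1}(\gamma')$ lies in $\mathscr{C}(E) \setminus \Gamma$, so by hypothesis there is some $\gamma_j \in \Gamma$ crossing it, and then $\Phi(\gamma_j) \in \Phi(\Gamma)$ crosses $\gamma'$ because $\Phi$ preserves the crossing relation. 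For condition (3), the map $\gamma_j \mapsto \Phi(\gamma_j)$ is a bijection between $\{\gamma_j \in \Gamma : \gamma_j \text{ crosses } \Phi^{-1}(\gamma')\}$ and $\{\delta \in \Phi(\Gamma) : \delta \text{ crosses } \gamma'\}$, so finiteness on one side transfers to the other.

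Finally, for the induced adjacency isomorphism, note that two cuts $\gamma_i, \gamma_j \in \Gamma$ are adjacent in $A(\Gamma)$ exactly when there exists a non-peripheral cut $\gamma \in \mathscr{C}(E)$ that crosses $\gamma_i$ and $\gamma_j$ but no other element of $\Gamma$. Applying $\Phi$ to such a witness $\gamma$ produces a non-peripheral cut $\Phi(\gamma) \in \mathscr{C}(E')$ that crosses $\Phi(\gamma_i)$ and $\Phi(\gamma_j)$ but no other element of $\Phi(\Gamma)$, using preservation of crossing and the fact that $\Phi$ restricts to a bijection $\Gamma \to \Phi(\Gamma)$. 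The same argument with $\Phi^{-1}$ gives the converse. Since $\Phi$ is already a bijection on vertex sets, this shows that its restriction $\Gamma \to \Phi(\Gamma)$ is a graph isomorphism $A(\Gamma) \to A(\Phi(\Gamma))$.

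There is no substantive obstacle here: the argument is purely formal, and the only thing to be careful about is ensuring that every appeal to ``$\Phi$ preserves crossing'' is legitimate, which it is precisely because both sides of each crossing comparison involve non-peripheral cuts (so that they are vertices of $\mathscr{C}$ and the edge relation of $\mathscr{C}$ gives us access to compatibility versus crossing).
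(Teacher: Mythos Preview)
Your proposal is correct and matches the paper's approach exactly: the paper treats this lemma as immediate (stating it with a \qedsymbol{} and no proof beyond the remark ``Because adjacency is preserved by isomorphisms''), and what you have written is simply the routine unpacking of that remark. There is nothing to add.
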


\begin{defn}\rm
A cut in $\mathscr{C}(E)$ is said to be \emph{outermost}
if at least one component of the corresponding partition of $E$
contains exactly two points.
\end{defn}

\begin{lem}
  Suppose that $E$ has at least seven elements.
  A cut $\gamma$ in $\mathscr{C}(E)$
  is outermost if and only if for each pants decomposition
  $\Gamma$ of $E$ containing $\gamma$,
  the vertex $\gamma$ has at most valence two in $A(\Gamma)$.
\end{lem}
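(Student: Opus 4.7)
\emph{Forward direction.} Suppose $\gamma=\{p,q\}\sqcup V$ is outermost. I would first observe that any cut $\delta$ crossing $\gamma$ must separate $p$ from $q$, since the two-point small side forces both its intersections with $\delta$ to be nonempty. In particular, every $\gamma_j\in\Gamma\setminus\{\gamma\}$, being compatible with $\gamma$, has $\{p,q\}$ on one side and a clopen $U'_j\subseteq V$ with $|U'_j|\geq 2$ on the other, and pairwise compatibility makes $\{U'_j\}$ a \emph{laminar} family on $V$. I would then establish two claims. \emph{Claim~1:} If $U'_j$ is not maximal in the laminar family, $\gamma_j$ is not adjacent to $\gamma$, because any nontrivial clopen bipartition $V=V^+\sqcup V^-$ with $U'_j$ straddling also has every $U'_l\supsetneq U'_j$ straddling, so the corresponding cut $\delta$ crosses $\gamma_l$ as well. \emph{Claim~2:} The family admits at most two maximal elements; for if $A_1,A_2,A_3$ were three distinct maxima (pairwise disjoint by laminarity), the cut $\delta^\ast=(A_1\cup A_2)\sqcup(E\setminus(A_1\cup A_2))$ would be non-peripheral, compatible with every cut of $\Gamma$ (by laminarity, every $U'_l$ sits on one side of $\delta^\ast$), yet not in $\Gamma$ (since $A_1\cup A_2\supsetneq A_1$ would contradict maximality of $A_1$), violating property~2 of pants decompositions. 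Together these claims bound the valence by two.

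\emph{Backward direction.} Suppose $\gamma=U\sqcup V$ is non-peripheral but not outermost, so $|U|,|V|\geq 3$; since $|E|\geq 7$, I may assume $|V|\geq 4$. I would partition $V=A_1\sqcup A_2$ with $|A_1|=2$ and $|A_2|\geq 2$, and pick a clopen $B_1\subseteq U$ with $|B_1|=2$. The cuts $\alpha_i=A_i\sqcup(E\setminus A_i)$ and $\beta=B_1\sqcup(E\setminus B_1)$ are non-peripheral and pairwise compatible with $\gamma$. I would then extend $\{\gamma,\alpha_1,\alpha_2,\beta\}$ to a pants decomposition $\Gamma$ (by Zorn's lemma on pairwise-compatible, locally-finite families, or by mimicking the existence argument). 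Because $A_1\sqcup A_2=V$ leaves no room for a strict supercut of $A_i$ inside $V$, the sets $A_1$ and $A_2$ remain maximal in the type-V laminar family of $\Gamma$, and similarly $B_1$ remains maximal on the $U$ side. To conclude, for any $a\in A_1$ the cut $\delta_1=(\{a\}\cup B_1)\sqcup\text{(rest)}$ is non-peripheral, crosses $\gamma$ and $\alpha_1$, and---because the choice $|A_1|=|B_1|=2$ precludes non-peripheral subcuts of $A_1$ or $B_1$, while every other piece of the decomposition lies entirely on one side---is compatible with every other cut of $\Gamma$. Symmetric constructions yield witnesses for $\alpha_2$ and $\beta$, so $\gamma$ has valence at least three in $A(\Gamma)$.

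The main obstacle is Claim~2 of the forward direction: identifying precisely which axiom of pants decompositions rules out a third maximal element in the laminar family, and verifying that the ``obstruction cut'' $\delta^\ast$ really is compatible with every $\gamma_l$ via the laminar structure. The backward direction is then largely routine, with the minimal-size $|A_1|=|B_1|=2$ choice being what prevents any subcuts introduced during the extension to a full pants decomposition from spoiling the witnessing cross-cuts.
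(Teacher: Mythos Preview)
Your forward direction is correct and essentially identical to the paper's: the paper also observes that every cut in $\Gamma\setminus\{\gamma\}$ has one side contained in $V$, that the resulting (laminar) family of subsets of $V$ has at most two maximal elements (your Claim~2, via exactly the obstruction cut $A_1\cup A_2$ you describe), and that $\gamma$ can only be adjacent to the cuts corresponding to those maxima (your Claim~1).

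The backward direction has a genuine gap. Your assertion ``similarly $B_1$ remains maximal on the $U$ side'' does not follow: unlike $A_1,A_2$, which \emph{partition} $V$ and therefore admit no proper supercut inside $V$, the set $B_1$ does not partition $U$. Whenever $|U|\ge 4$, an arbitrary extension of $\{\gamma,\alpha_1,\alpha_2,\beta\}$ to a pants decomposition may contain a cut $\gamma_l$ with one side $W_l$ satisfying $B_1\subsetneq W_l\subsetneq U$, and then your witness $\delta_1=(\{a\}\cup B_1)\sqcup(\text{rest})$ crosses $\gamma_l$ (one computes $W_l\cap\delta_1=B_1$ and $W_l\setminus\delta_1=W_l\setminus B_1$, both nonempty, and the other two intersections contain $\{a\}$ and $A_2$ respectively). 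So $\delta_1$ does not witness adjacency of $\gamma$ and $\alpha_1$ \emph{alone}. A parallel problem affects the ``symmetric'' witness for $\alpha_2$: the cut $(\{a'\}\cup B_1)$ with $a'\in A_2$ will cross any cut of $\Gamma$ whose small side lies inside $A_2$ and contains $a'$, and since $|A_2|$ may be large such subcuts are generically present in the extension. The justification you give (``$|A_1|=|B_1|=2$ precludes non-peripheral subcuts of $A_1$ or $B_1$'') handles only \emph{sub}cuts of these two-point sets, not supercuts of $B_1$ or subcuts of $A_2$.

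The paper meets the same obstacle and resolves it by constructing the pants decomposition with care, choosing the relevant ``halves'' to themselves be sides of cuts in $\Gamma$ so that no offending intermediate cuts appear. Your argument can be repaired the same way---e.g.\ also partition $U$ into two pieces when $|U|\ge 4$, and arrange the extension inside $A_2$ so that some point of $A_2$ lies outside every proper subcut---but as written the extension step is too cavalier to support the conclusion.
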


Notice that the lemma is not true for $E$ having six points,
the reason being that maximal simplices in $\mathscr{C}(E)$
are of dimension two in this case,
each maximal simplex is a pants decomposition $\Gamma$,
and the graph $A(\Gamma)$ is connected.
Each such graph $A(\Gamma)$ has three vertices,
so each vertex of the graph has at most valence two,
regardless of whether it is outermost,
and there exist non-outermost cuts in $E$ in this case.
However, the corollary that we will derive from it,
namely that isomorphisms
of complexes of cuts
send outermost cuts to outermost cuts,
is still true for $E$
having between four and six elements;
this is because either \emph{every} cut is outermost
(for four and five)
or outermost cuts are distinguished in $\mathscr{C}(E)$
from others by their valence in $\mathscr{C}(E)$
(for six).

\begin{proof}
  Suppose that $\gamma = U \sqcup V$
  is a cut in $E$ which is not outermost.
  Then since $E$ has at least seven elements,
  we have that $U$ has at least three and $V$ at least four
  (up to swapping them).
  In particular $U$ contains a clopen subset $U'$
  with at least two elements and $V$ contains disjoint clopen subsets
  $V'$ and $V''$ whose union is $V$
  and for which each subset contains at least two elements.
  These sets yield cuts $\gamma_1$, $\gamma_2$ and $\gamma_3$.
  We claim that $\gamma$ is adjacent to each of these cuts
  in some pants decomposition containing all four.
  Indeed, by definition,
  we may further divide $U'$ into clopen subsets $U'^+$ and $U'^-$
  with at least one element,
  and one sees directly that $U'^+ \sqcup V'$ is half a cut which
  crosses $\gamma$
  and $\gamma_1$,
  and that by using the clopen complement of $U'$
  in $U$ together with halves of $V'$ and $V''$,
  we can construct cuts that cross $\gamma$
  and $\gamma_2$ and cuts that cross $\gamma$ and $\gamma_3$.
  By choosing these halves to be sides of cuts
  in our pants decomposition if necessary,
  we can show that these cuts cross \emph{only}
  those elements,
  showing that $\gamma$ is adjacent to $\gamma_1$, $\gamma_2$ and $\gamma_3$
  as desired.

  Now suppose that $\gamma = U \sqcup V$
  is outermost with $U$ containing two elements.
  Since $E$ contains at least seven elements,
  in any pants decomposition $\Gamma$ of $E$
  containing $\gamma$,
  every cut $\gamma_n \in \Gamma$
  has a side $V_n$ contained in $V$.
  Moreover, we have that
  the poset of clopen subsets $V_n$ properly contained in $V$
  has at most two maximal elements with respect to inclusion.
  Indeed, if it had three, say $V_1$, $V_2$ and $V_3$,
  then $V_1 \sqcup V_2$ would determine a cut
  compatible with every cut in $\Gamma$,
  in contradiction to the assumption that $\Gamma$ is a pants decomposition.
  Arguing as before,
  we can show that $\gamma$ is adjacent to the cuts
  determined by $V_1$ and $V_2$
  and no others.
\end{proof}

\begin{defn}
A pair of cuts $\gamma$ and $\eta$ are said to
form a \emph{peripheral pair}
if they are compatible and if,
thinking of them as partitions $U\sqcup V$ and $U'\sqcup V'$,
one of the sets $U\cap U'$, $U \cap V'$, $V \cap U'$ or $V \cap V'$ is a singleton.
Notice that since $\gamma$ and $\eta$ are compatible,
after relabeling, we may assume that $U \cap U' = \varnothing$,
from which it follows that the singleton must be $V \cap V'$.
\end{defn}

Observe that in any pants decomposition $\Gamma$ of $E$
containing the peripheral pair $\gamma$ and $\eta$,
these cuts $\gamma$ and $\eta$ are adjacent in $\Gamma$.
The simplest example happens when $E$ is a set of five points, say
$E = \{ \star, a_1, a_2, b_1, b_2 \}$.
One such peripheral pair has $\gamma = \{ a_1, a_2 \} \sqcup \{ \star, b_1, b_2 \}$ and $\eta = \{ b_1, b_2 \} \sqcup \{ \star, a_1, a_2 \}$.
One requisite cut demonstrating adjacency is
$\kappa = \{a_1, b_1 \} \sqcup \{ \star, a_2, b_2 \}$.

We have the following pair of lemmas.

\begin{lem}
  Suppose that $\gamma$ is an outermost cut
  and that $\gamma$ and $\eta$ form a peripheral pair.
  Then in any pants decomposition $\Gamma$ containing $\gamma$ and $\eta$,
  we have that $\gamma$ has valence one in $A(\Gamma)$.
\end{lem}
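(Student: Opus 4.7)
My plan is to apply the preceding lemma and unpack the peripheral pair structure to pin down the neighbors of $\gamma$ in $A(\Gamma)$. Writing $\gamma = U \sqcup V$ with $|U| = 2$ (outermost) and $\eta = U' \sqcup V'$, the relabeling convention in the definition of a peripheral pair lets us assume $U \cap U' = \varnothing$ and $V \cap V' = \{\star\}$ is a singleton. These force $U \subset V'$, so $V' = U \cup \{\star\}$ and $U' = V \setminus \{\star\}$; in particular the side of $\eta$ contained in $V$ is precisely $U'$, a clopen co-singleton of $V$.

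By the argument in the proof of the previous lemma, for an outermost cut $\gamma = U \sqcup V$ in a pants decomposition $\Gamma$, the neighbors of $\gamma$ in $A(\Gamma)$ are exactly those cuts $\gamma_n \in \Gamma$ whose unique side $V_n$ contained in $V$ is maximal in the poset of such sides ordered by inclusion. So it suffices to show this poset has the single maximal element $U'$. Suppose $V_m \subsetneq V$ is a maximal side of some $\gamma_m \in \Gamma$. Since $\gamma_m$ and $\eta$ are compatible, one of the four pairwise intersections of their parts is empty; a brief case analysis using $V_m \subset V$ and $V \cap V' = \{\star\}$ shows that $V_m$ and $U'$ must be either nested or disjoint as subsets of $V$ (the one remaining case, $V' \subset V_m$, is ruled out because $U \subset V'$ is disjoint from $V \supset V_m$). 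Disjointness forces $V_m \subseteq V \setminus U' = \{\star\}$, contradicting non-peripherality of $\gamma_m$ which requires $|V_m| \ge 2$; nesting together with maximality and $V_m \ne V$ forces $V_m = U'$.

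Hence $U'$ is the unique maximal side contained in $V$, so $\gamma$ has $\eta$ as its unique neighbor in $A(\Gamma)$, giving valence one as required. The main conceptual step is recognizing that the peripheral pair condition makes $U'$ a co-singleton of $V$, which leaves no room for a second maximal side; the remaining compatibility analysis is routine.
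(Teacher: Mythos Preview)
Your argument is correct and takes a genuinely different route from the paper. The paper works directly with witness cuts: writing $U=\{y,z\}$ and the side of $\eta$ containing $U$ as $\{x,y,z\}$, it supposes $\gamma$ is adjacent to some $\delta\ne\eta$ in $\Gamma$, observes (by compatibility of $\delta$ with $\eta$) that the side of $\delta$ containing $U$ must contain $\{x,y,z\}$, and then shows that any cut $\kappa$ witnessing the adjacency of $\gamma$ and $\delta$ must, in order not to cross $\eta$, have one side equal to $\{x,y\}$ or $\{x,z\}$---whereupon $\kappa$ visibly fails to cross $\delta$. You instead invoke the ``neighbors of $\gamma$ are exactly the cuts with maximal side in $V$'' description extracted from the preceding lemma's proof, and show by a short compatibility case analysis that the peripheral-pair hypothesis forces $U'=V\setminus\{\star\}$ to be the unique maximal element of that poset. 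Your version is cleaner and recycles work already done; the paper's is more self-contained and makes the obstruction explicit at the level of witness cuts. One small caution: the characterization you cite is only sketched in the paper (``arguing as before''), but the direction you actually need---that any neighbor of $\gamma$ has maximal side in $V$---is immediate, since a $\kappa$ crossing $\gamma$ and some non-maximal $\gamma_n$ would automatically cross the cut in $\Gamma$ whose side properly contains $V_n$.
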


\begin{proof}
  Indeed, suppose $\gamma = U \sqcup V$
  is adjacent to $\eta = U' \sqcup V'$,
  where $U = \{ y, z \}$ has size two
  and $\gamma$ and $\eta$ form a peripheral pair.
  We may suppose that $U' = U \sqcup \{x\}$.
  Suppose $\gamma$ is adjacent to another cut 
  $\delta = U'' \sqcup V''$
  in $\Gamma$.
  Then without loss of generality $U' \subset U''$.
  Both sides of any cut $\kappa$ witnessing the adjacency of $\gamma$ and $\delta$
  must have nonempty intersection with $U$ and with $V$.
  The only nonperipheral choices for such a $\kappa$
  force one side of $\kappa$ to be either
  $\{ x, y \}$ or $\{ x, z \}$,
  since any other choice produces a cut which crosses $\eta$.
  But these choices produce cuts $\kappa$ which fail to cross $\delta$.
  This contradiction shows that $\eta$
  is the only cut in $\Gamma$ to which $\gamma$ is adjacent.
\end{proof}

\begin{defn}
For a simplicial graph like $\mathscr{C}(E)$,
if $\gamma$ is a vertex,
we define
the \emph{link} $L(\gamma)$
of $\gamma$ to be the \emph{full} or \emph{induced}
subgraph comprising those vertices of $\mathscr{C}(E)$
which are adjacent to $\gamma$,
with an edge between two vertices of the link when the corresponding vertices
of $\mathscr{C}(E)$ are connected by an edge.
\end{defn}

\begin{defn}
Given a simplicial graph $L(\gamma)$,
its \emph{opposite graph} ${L(\gamma)}^\perp$
is the graph on the same vertex set but with an edge between vertices
when there is \textbf{no} edge in $L(\gamma)$.
\end{defn}

Observe that if $\gamma = U \sqcup V$ is a cut in $E$,
we may form two new Stone spaces,
namely $U \sqcup \{V\}$ and $V \sqcup \{U\}$
by alternately collapsing all points in $V$
to a singleton or collapsing all points in $U$.
Observe that every non-peripheral cut in $\mathscr{C}(E)$
compatible with $\gamma$
yields a non-peripheral cut in one of these Stone spaces
but not both,
and conversely, every non-peripheral cut in one of these Stone
spaces yields a cut in $E$ compatible with $\gamma$.
Indeed, it is not hard to see that $L(\gamma)$
is isomorphic to the \emph{join} of the graphs
$\mathscr{C}(U \sqcup\{V\})$ and $\mathscr{C}(V \sqcup\{U\})$.
If $\gamma$ is not outermost,
then both of these graphs are nonempty
and we see that ${L(\gamma)}^\perp$ has two components.

Here is some intuition.
Imagine embedding $E$ into $S^2$
(this is, of course, possible if and only if $E$ is second countable)
and realizing a cut $\gamma$ as a Jordan curve (also called $\gamma$)
in $S^2 - E$
which separates points of $U$ from those of $V$.
The cut $\gamma$ is peripheral if and only if one of the two components
of $S^2 - (E \cup \gamma)$ contains exactly one point of $E$
(i.e. is homeomorphic to an open annulus).
Every cut in $\mathscr{C}(E)$ compatible with yet distinct from $\gamma$
may be drawn on $S^2$ disjoint from $\gamma$;
thus it is contained in one of the two components of $S^2 - (E \cup \gamma)$.
One such component is homeomorphic to the complement in $S^2$ of the Stone space $U \sqcup \{V\}$,
while the other is homeomorphic to the complement of $V \sqcup \{U\}$.
If $\eta$ is a nonperipheral cut in $E$ compatible with $\gamma$,
it is a nonperipheral cut in one component of the complement and does not appear in the other.

Conversely, every nonperipheral cut in $U \sqcup \{V\}$ or $V \sqcup \{U\}$
yields, under the homeomorphism of complements above, a nonperipheral cut in $E$ compatible with $\gamma$.
Nonperipheral cuts, recall, exist in every Stone space with at least four points,
so $\mathscr{C}(U \sqcup \{V\})$, say, will be empty only when $U$ is a two-point set.
Since one forms the join $\Gamma\star\Gamma'$
of two graphs $\Gamma$ and $\Gamma'$ by beginning with the disjoint union
and connecting every vertex of $\Gamma$ to every vertex of $\Gamma'$,
we have that $(\Gamma\star\Gamma')^\perp = \Gamma^\perp \sqcup \Gamma'^\perp$,
which has at least two connected components when $\Gamma$ and $\Gamma'$ are nonempty.
Indeed, in the case of $L(\gamma)^\perp$, this graph has exactly two components, since
in every Stone space with at least four points, for any two compatible cuts
it's easy to see there is a third which crosses both of them.

\begin{lem}\label{opposite graph}
  Suppose that $\gamma$ and $\eta$ are compatible cuts
  but that neither $\gamma$ nor $\eta$ is outermost.
  Then we have that $\gamma$ and $\eta$ form a peripheral pair
  if and only if the opposite graph of the full subgraph
  on the intersection of their links,
  that is,
  ${(L(\gamma) \cap L(\eta))}^\perp$, has two connected components.
\end{lem}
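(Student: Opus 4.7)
The plan is to identify $L(\gamma) \cap L(\eta)$ as a graph-theoretic join of three smaller cut complexes and then invoke the observation, recalled just before the lemma statement, that $\mathscr{C}(X)^{\perp}$ is nonempty and connected whenever $X$ is a Stone space with at least four points. After relabeling if necessary we may assume that the vanishing intersection is $U \cap U'$, and set $A = U$, $B = U'$, and $C = V \cap V'$, so that $E = A \sqcup B \sqcup C$ with $|A|, |B| \geq 3$ (because neither cut is outermost) and $|C| \geq 1$ (because $\gamma$ and $\eta$ are distinct). Under these constraints $\gamma$ and $\eta$ form a peripheral pair precisely when $|C| = 1$.

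I claim that
\[
  L(\gamma) \cap L(\eta) \;\cong\; \mathscr{C}(A \sqcup \{*\}) \star \mathscr{C}(B \sqcup \{*\}) \star \mathscr{C}(C \sqcup \{*_A, *_B\}),
\]
where $\star$ denotes the graph join and in each factor the extra point(s) represent the complementary region(s) collapsed. Given $\kappa = W \sqcup W' \in L(\gamma) \cap L(\eta)$, a case analysis on which of the four intersections vanishes for each compatibility condition shows that, after possibly swapping $W$ and $W'$, exactly one of the following holds: $W \subsetneq A$ with $|W| \geq 2$; $W \subsetneq B$ with $|W| \geq 2$; $W \subseteq C$ with $|W| \geq 2$; or $W = A \sqcup W_C$ for some proper nonempty clopen $W_C \subseteq C$. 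The first two cases enumerate precisely the nonperipheral cuts of $A \sqcup \{*\}$ and $B \sqcup \{*\}$ respectively, while the last two together enumerate the nonperipheral cuts of $C \sqcup \{*_A, *_B\}$, distinguished by whether the two extra points end up on the same side of the cut or on opposite sides. Two such cuts lying in different regions are automatically compatible in $\mathscr{C}(E)$ because their small sides sit in disjoint pieces of the partition $A \sqcup B \sqcup C$, forcing one of the four pairwise intersections to be empty. Two such cuts lying in the same region are compatible in $\mathscr{C}(E)$ if and only if they are compatible in the corresponding smaller cut complex, because the remaining ``large-versus-large'' intersection always contains the entire complementary region and so is nonempty.

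Taking opposites converts the join into the disjoint union
\[
  \bigl(L(\gamma) \cap L(\eta)\bigr)^{\perp} \;\cong\; \mathscr{C}(A \sqcup \{*\})^{\perp} \;\sqcup\; \mathscr{C}(B \sqcup \{*\})^{\perp} \;\sqcup\; \mathscr{C}(C \sqcup \{*_A, *_B\})^{\perp}.
\]
Since $|A| + 1, |B| + 1 \geq 4$, the first two summands are nonempty and connected. The third summand is nonempty, and then also connected, precisely when $|C| \geq 2$, and is empty when $|C| = 1$. Hence $(L(\gamma) \cap L(\eta))^{\perp}$ has exactly two connected components if and only if $|C| = 1$, which is equivalent to $\gamma$ and $\eta$ forming a peripheral pair. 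The main obstacle is the case analysis in the middle paragraph producing the join decomposition; once that is in hand, the component count is immediate.
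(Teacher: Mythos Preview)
Your proof is correct and follows essentially the same approach as the paper: both identify the three-region decomposition $E = A \sqcup B \sqcup C$ and recognize that $L(\gamma)\cap L(\eta)$ is the join of $\mathscr{C}(A\sqcup\{*\})$, $\mathscr{C}(B\sqcup\{*\})$, and $\mathscr{C}(C\sqcup\{*_A,*_B\})$, with the third factor vanishing exactly in the peripheral-pair case. Your version carries out the join decomposition by a direct combinatorial case analysis, whereas the paper leans on the Jordan-curve picture in $S^2$ for the same conclusion; the content is the same.
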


\begin{proof}
  From the foregoing discussion,
  we may think of a cut $\delta$ in $L(\gamma)$
  as coming from a cut in one of the Stone spaces $U\sqcup\{V\}$
  or $V \sqcup\{U\}$, where $\gamma = U\sqcup V$.
  If $\delta$ also belongs to $L(\eta)$,
  then $\delta$ also corresponds to a cut in $U'\sqcup\{V'\}$
  or in $V' \sqcup\{U'\}$.
  Since $\gamma$ and $\eta$ are compatible,
  we may suppose that $U \cap U' = \varnothing$,
  or in other words, that there are \emph{no} cuts in $L(\gamma) \cap L(\eta)$
  corresponding to cuts in both $U\sqcup\{V\}$ and $U' \sqcup \{V'\}$.
  If $\gamma$ and $\eta$ form a peripheral pair,
  if follows that $V \cap V'$ is a one-point set,
  and that there is again no cut $\delta$ which is distinct from
  $\gamma$ and $\eta$ but which corresponds
  to a cut in both $V\sqcup\{U\}$ and $V'\sqcup\{U'\}$,
  while if $\gamma$ and $\eta$ do \emph{not}
  form a peripheral pair,
  there \emph{is} such a cut $\delta$.

  The intuition of Jordan curves on $S^2$ is helpful again:
  \emph{a priori} ${(L(\gamma) \cap L(\eta))}^\perp$ has a maximum of three components,
  one for each connected component of $S^2 - (E \cup \gamma \cup \eta)$
  when $\gamma$ and $\eta$ are compatible.
  If neither $\gamma$ nor $\eta$ is outermost, both $U \sqcup \{V\}$ and $U' \sqcup \{V'\}$
  admit nonperipheral cuts, so there are at least two components in ${(L(\gamma) \cap L(\eta))}^\perp$.
  Assuming that $\gamma = U \sqcup V$, that $\eta = U' \sqcup V'$, and that $U \cap U' = \varnothing$,
  cuts in the third complementary component of $S^2 - (E \cup \gamma \cup \eta)$
  correspond to cuts in the Stone space $(V \cap V') \sqcup \{U, U'\}$.
  This space has nonperipheral cuts
  precisely when $\gamma$ and $\eta$ do \emph{not} form a peripheral pair.
  Therefore ${(L(\gamma) \cap L(\eta))}^\perp$ has two connected components if and only if
  the non-outermost cuts $\gamma$ and $\eta$ form a peripheral pair.
\end{proof}

The foregoing lemmas have the following corollary.

\begin{cor}
  If $\Phi\colon \mathscr{C}(E) \to \mathscr{C}(E')$
  is an isomorphism of complexes of cuts,
  then $\Phi$ sends peripheral pairs to peripheral pairs.
\end{cor}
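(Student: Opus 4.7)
The plan is to split the argument by whether the peripheral pair $(\gamma, \eta)$ contains an outermost cut, after first proving that $\Phi$ preserves the property of being outermost. For the preliminary, I would apply the outermost-characterization lemma (a cut is outermost iff it has valence at most two in the adjacency graph of every pants decomposition containing it) together with the earlier lemma that an isomorphism of complexes of cuts sends pants decompositions to pants decompositions and induces isomorphisms of their adjacency graphs, so the valence-two condition is preserved. This covers $E$ with at least seven points; the cases with between four and six points are handled by the remarks following that lemma, since there outermost cuts are either all of $\mathscr{C}(E)$ or distinguished by their valence in $\mathscr{C}(E)$ itself.

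If neither $\gamma$ nor $\eta$ is outermost, then neither are $\Phi(\gamma)$ or $\Phi(\eta)$ by the preliminary. Now I would apply \Cref{opposite graph}: the peripheral-pair condition for such a pair is equivalent to ${(L(\gamma) \cap L(\eta))}^\perp$ having exactly two connected components. Since $\Phi$ is a graph isomorphism, it carries $L(\gamma) \cap L(\eta)$ bijectively onto $L(\Phi(\gamma)) \cap L(\Phi(\eta))$ as a full subgraph, and the opposite-graph construction is purely edge-theoretic, so the component count is preserved. Applying \Cref{opposite graph} in the other direction concludes that $(\Phi(\gamma), \Phi(\eta))$ is a peripheral pair.

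In the remaining case, assume without loss of generality that $\gamma$ is outermost. Here I would promote the preceding ``outermost cut in a peripheral pair has valence one'' lemma to a biconditional: for outermost $\gamma$, the pair $(\gamma, \eta)$ is peripheral if and only if in some (hence every) pants decomposition $\Gamma$ containing both, $\gamma$ has valence one in $A(\Gamma)$ and its unique neighbor there is $\eta$. The forward direction is the lemma itself; for the converse, write $\gamma = U \sqcup V$ with $|U| = 2$. Every cut in $\Gamma \setminus \{\gamma\}$ has a unique side properly contained in $V$, and the valence-one hypothesis means these sides have a unique maximal element $V_1 \subsetneq V$. If $V \setminus V_1$ had two or more points, then the non-peripheral cut $(V \setminus V_1) \sqcup (U \cup V_1)$ would be compatible with every cut in $\Gamma$ (since each properly contained $V$-side lies inside $V_1$, while its intersection with $U$ is empty), contradicting the maximality of $\Gamma$ as a simplex; hence $V \setminus V_1$ is a singleton and $(\gamma, \eta)$ is a peripheral pair. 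With this biconditional in hand, the preservation of outermost cuts, pants decompositions, adjacency graphs, and valence under $\Phi$ finishes the case. The main obstacle is precisely this converse direction: the compatibility verification that rules out $|V \setminus V_1| \geq 2$; the rest is an assembly of functorial properties already established in the excerpt.
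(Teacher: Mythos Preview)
Your proposal is correct and follows exactly the route the paper's lemmas set up; the paper itself offers no proof beyond declaring the corollary a consequence of ``the foregoing lemmas.'' Your case split and your use of \Cref{opposite graph} for the non-outermost case are the intended argument, and you correctly notice that the valence-one lemma is stated only in one direction and that the converse must be supplied for the outermost case---this is the real content the paper leaves implicit.

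One small point in your converse argument: the assertion that ``each properly contained $V$-side lies inside $V_1$'' is not automatic when $E$ is infinite, since \emph{a priori} the poset of $V$-sides could contain an infinite ascending chain with no maximal element above it. This is ruled out by the third (finiteness) axiom for pants decompositions: if $W_1 \subsetneq W_2 \subsetneq \cdots$ were such a chain, then the cut $\{a,x\} \sqcup (E \setminus \{a,x\})$ with $a \in U$ and $x \in W_1$ would cross every corresponding $\delta_i$. Alternatively, your compatibility check for $\mu = (V \setminus V_1) \sqcup (U \cup V_1)$ actually goes through without this claim: by laminarity every $V$-side $W$ is either contained in $V_1$ (giving $(V\setminus V_1) \cap W = \varnothing$) or disjoint from it (giving $(U \cup V_1) \cap W = \varnothing$), and $\mu \notin \Gamma$ since otherwise its $V$-side $V \setminus V_1$ would be a second maximal element.
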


We need a slight strengthening of \Cref{opposite graph}.
If the cuts $\gamma_1,\ldots,\gamma_k$ are merely pairwise compatible,
rather than a pants decomposition
notice that we may still define an adjacency graph for these cuts.
\begin{lem}\label{spheres before spheres}
  Suppose $E$ is second countable.
  Let $\gamma_1, \ldots, \gamma_k$ be pairwise compatible cuts.
  The graph ${(L(\gamma_1)\cap\cdots\cap L(\gamma_k))}^\perp$ has at most $k+1$ components.
  Moreover, if it has exactly $k+1$ components,
  then the following statements hold:
  \begin{enumerate}
  \item No cut $\gamma_i$ is outermost.
  \item No pair $\gamma_i$, $\gamma_j$ is peripheral.
  \item For every triple $\gamma_i = U_i \sqcup V_i$, $\gamma_j = U_j \sqcup V_j$, $\gamma_k = U_k \sqcup V_k$,
    if the adjacency graph $A(\{\gamma_i, \gamma_j, \gamma_k\})$ is a triangle,
 then we may write each cut so that $U_i \cap U_j = U_i \cap U_k = U_j \cap U_k = \varnothing$,
    and moreover the triple intersection $V_i \cap V_j \cap V_k$ is nonempty.
  \end{enumerate}
\end{lem}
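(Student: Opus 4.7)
The plan is to generalize the geometric approach of \Cref{opposite graph}. Since $E$ is second countable, embed $E \hookrightarrow S^2$ and realize the pairwise compatible cuts $\gamma_1, \ldots, \gamma_k$ as $k$ simultaneously disjoint Jordan curves in $S^2 - E$; pairwise compatibility allows all of them to be drawn disjointly at once. These curves divide $S^2$ into exactly $k + 1$ regions $R_0, \ldots, R_k$, organized as a tree whose vertices are regions and whose edges are Jordan curves. For each region $R$, collapsing the portion of $E$ beyond each bounding curve to a single point yields a local Stone space $S_R$; every cut in $L(\gamma_1) \cap \cdots \cap L(\gamma_k)$ is realized by a Jordan curve disjoint from all $\gamma_i$, so it lies in exactly one region and corresponds to a cut of that $S_R$.

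For the bound of $k + 1$ components, I would first observe that cuts in distinct regions are automatically compatible (their Jordan curves are disjoint), so such vertices are adjacent in $L(\gamma_1) \cap \cdots \cap L(\gamma_k)$ and hence \emph{non-adjacent} in its opposite graph. Within a single region $R$, the observation stated just before the lemma---that in any Stone space with at least four points, any two compatible cuts admit a third crossing both---applied to $S_R$ shows that the opposite graph restricted to cuts in $R$ is connected whenever nonempty. Each region thus contributes at most one component, and the total is at most $k + 1$.

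For the ``moreover'' part, equality forces every region to contribute a component, i.e., every $S_R$ must support a non-peripheral cut lifting to a non-peripheral cut of $E$ distinct from each $\gamma_i$. For Claim (1), if $\gamma_i$ is outermost with $|U_i| = 2$, then for each $j \neq i$, compatibility of $\gamma_j$ with $\gamma_i$ combined with $|U_j|, |V_j| \geq 2$ forces $U_i$ to lie wholly on one side of $\gamma_j$; hence the region containing $U_i$ is bounded only by $\gamma_i$, giving $|S_R| = 3$ and no non-peripheral cut in $E$ living there. For Claim (2), if $\gamma_i, \gamma_j$ form a peripheral pair with $V_i \cap V_j = \{p\}$, no other $\gamma_\ell$'s Jordan curve can be drawn in the ``between'' region---either it would produce a peripheral cut of $E$ or fail to yield a valid cut at all---so that region has $|S_R| = 3$ as well. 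In both scenarios one region contributes $0$, cutting the total to at most $k$ and contradicting equality.

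For Claim (3), three disjoint Jordan curves in $S^2$ come in two topological types: \emph{nested} (three concentric-like curves) or \emph{star} (an outer curve enclosing two inner curves side by side). In the nested case, any Jordan curve crossing both the outer and the inner must also cross the middle, so the triple's adjacency graph is a path and not a triangle; triangles therefore force the star. Labeling $U_i$ as the external region of each curve (outside the outer, inside each of the two inners) gives pairwise disjoint $U$'s, and $V_i \cap V_j \cap V_k$ equals the $E$-points in the star's central region. If this intersection were empty, I would use a counting argument on the sub-regions of the central region after subdivision by the remaining $k - 3$ curves: the sum of bounding-curve counts across the $m + 1$ sub-regions is only $2m + 3$, strictly less than the $4(m + 1)$ required for each purely collapsed $S_{R'}$ to admit a non-peripheral cut, so some sub-region contributes no component and the total falls below $k + 1$. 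This subdivision counting is the main obstacle; the remaining parts reduce cleanly to size considerations on local Stone spaces as in \Cref{opposite graph}.
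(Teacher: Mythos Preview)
Your proposal is correct and follows essentially the same geometric approach as the paper: embed $E$ in $S^2$, realize the compatible cuts as disjoint Jordan curves, observe that the dual graph is a tree with $k+1$ vertices, and argue that a region contributes a component of the opposite graph precisely when the associated local Stone space has at least four points.

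One simplification worth noting: the counting argument you flag as ``the main obstacle'' in claim~(3) is unnecessary. The very observation you already make for claim~(2) applies verbatim here. If $V_i \cap V_j \cap V_k = \varnothing$, then the central (star) region contains no points of $E$; any further $\gamma_\ell$ drawn there would have one side lying entirely in that region and hence containing no points of $E$, making $\gamma_\ell$ peripheral---a contradiction. Thus $m = 0$ is forced, the central region is itself one of the $k+1$ complementary regions, and it has $n + k = 0 + 3 = 3$, so it contributes no component. This is exactly how the paper handles it, and it removes the need for the Euler-style count $2m + 3 < 4(m+1)$.
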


\begin{proof} 
  Although the statement is likely true for Stone spaces which are not second countable,
  %% Is it true?
  the proof is again more straightforward if one considers an embedding of $E$ in $S^2$.
  Then each of the cuts $\gamma$ in $\{\gamma_1,\ldots,\gamma_k\}$ may be represented by a Jordan curve
  in $S^2$. Compatibility says these curves may be drawn disjoint from each other,
  and the complement of $S^2$ minus these $k$ curves has $k+1$ components.

  Perhaps the easiest way to see this is to form the graph dual to this system of Jordan curves on $S^2$.
  That is, place a vertex for each complementary component and connect components via an edge when they border the same curve.
  This graph is a tree, since $S^2$ is simply connected
  and any loop in the dual graph would be homotopically nontrivial in the surface.
  It is a standard fact that a tree with $k$ edges has $k + 1$ vertices.

  Similar to the case of one or two cuts, the closure of each component of
  $S^2 - (E \cup \gamma_1 \cup \cdots \cup \gamma_k)$
  will contain an essential simple closed curve
  (which corresponds to a non-peripheral cut)
  provided it is not homeomorphic to a sphere with $n$ punctures and $k$ boundary components
  with $n + k \le 3$.

  Translating this into the language of cuts,
  we see that if each complementary surface contains an essential simple closed curve,
  then $\gamma_i$ cannot be outermost (for then one component of $(S^2 - E) - \gamma_i$
  would be a disc with two punctures),
  no pair $\gamma_i$, $\gamma_j$ can be peripheral
  (this would yield a component which is a sphere with one puncture and two boundary components)
  and the third condition also holds:
  the adjacency graph is a triangle exactly when one component of $S^2 - \{\gamma_i,\gamma_j,\gamma_k\}$
  is a sphere with three boundary components;
  that sphere corresponds to $V_i \cap V_j \cap V_k$,
  and is therefore a sphere with three boundary components in $(S^2 - E) - \{\gamma_i,\gamma_j,\gamma_k\}$
  when this triple intersection is empty.
\end{proof}

\section{Realizing Automorphisms}

We prove one piece of \Cref{maintheorem}
in the following slightly stronger form.

\begin{thm}\label{isomorphisms are geometric}
  Suppose $\Phi\colon \mathscr{C}(E) \to \mathscr{C}(E')$
  is an isomorphism,
  where the spaces $E$ and $E'$ have at least four points
  and are second countable.
  Then there exists a homeomorphism $f\colon E \to E'$
  inducing the isomorphism $\Phi$.
\end{thm}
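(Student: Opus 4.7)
The plan is to build a bijection $f\colon E \to E'$ from $\Phi$ by identifying points of $E$ combinatorially in $\mathscr{C}(E)$, then verify that $f$ induces $\Phi$ and is a homeomorphism. The first step is to observe that $\Phi$ sends outermost cuts to outermost cuts. For $|E| \geq 7$ this follows from the preceding lemma characterizing outermost cuts via valence in adjacency graphs of pants decompositions; for $|E|$ between four and six the same conclusion holds by the remarks immediately following that lemma (either every cut is outermost, or outermost cuts are distinguished by valence in $\mathscr{C}(E)$ itself).

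Next, I identify each point $p \in E$ with the set $O(p)$ of outermost cuts $\{x,y\} \sqcup (E - \{x,y\})$ whose two-point side contains $p$. A direct check shows that two outermost cuts are non-adjacent in $\mathscr{C}(E)$ (i.e.\ they cross) if and only if their two-point sides share a common element; hence $O(p)$ is a family of pairwise-crossing outermost cuts. A sunflower-style argument shows that the maximal such families in $\mathscr{C}(E)$ are exactly the $O(p)$ together with the \emph{triangles} $\{\{a,b\}, \{b,c\}, \{a,c\}\}$ on three points of $E$. For $|E| \geq 5$ these two types are distinguished by cardinality, since $|O(p)| \geq 4$ (or $O(p)$ is countably infinite when $E$ is infinite) while triangles have exactly three elements. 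Consequently $\Phi$ carries each $O(p)$ to some $O(p')$ with $p' \in E'$, and the rule $p \mapsto p'$ defines the required bijection $f\colon E \to E'$.

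To verify that $f$ induces $\Phi$, first note that for distinct $p, q \in E$ the cut $\{p,q\} \sqcup (E - \{p, q\})$ is the unique member of $O(p) \cap O(q)$, so $\Phi$ sends it to $\{f(p), f(q)\} \sqcup (E' - \{f(p), f(q)\})$. For a general non-peripheral cut $\gamma = U \sqcup V$, the cut $\{p,q\} \sqcup (E - \{p,q\})$ is compatible with $\gamma$ exactly when $\{p, q\}$ lies on one side of $\gamma$. Applying $\Phi$ yields the analogous statement for $\Phi(\gamma)$ and $\{f(p), f(q)\}$, which forces $\Phi(\gamma) = f(U) \sqcup f(V)$. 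In particular $f$ sends every clopen $U \subseteq E$ with $\min(|U|, |E - U|) \geq 2$ to a clopen subset of $E'$, and such clopen sets generate the topology on $E$: for any isolated point $p$, splitting $E - \{p\}$ into two disjoint clopen pieces each of size at least two (possible since $|E| \geq 5$ and $E$ is a Stone space) exhibits $\{p\}$ as the intersection of two such generators. Thus $f$ is continuous, hence a homeomorphism since $E$ is compact and $E'$ is Hausdorff.

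The main obstacle is the combinatorial point-recovery in the second step: the structure of $\mathscr{C}(E)$ must distinguish intrinsically between ``pencils'' $O(p)$ and accidental triangles of outermost cuts, which is precisely where the hypothesis $|E| \geq 5$ becomes essential. The exceptional case $|E| = 4$, where every cut is outermost and $\mathscr{C}(E)$ consists of three isolated vertices, must be handled separately using the classical surjection $S_4 \twoheadrightarrow S_3$; in that case $f$ exists but is not unique.
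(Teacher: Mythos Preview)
Your argument is clean for finite $E$, but there is a genuine gap in the infinite case: the whole construction rests on outermost cuts, and in an infinite Stone space these need not exist at the points where you need them. An outermost cut has a two-element clopen side, and a two-element subset $\{p,q\}$ of $E$ is clopen only when both $p$ and $q$ are isolated. But an infinite compact Hausdorff space always has a non-isolated point (the open cover by singletons would otherwise admit no finite subcover), so for any such $p$ the set $O(p)$ is empty and your map $f$ is simply undefined at $p$. In the extreme case where $E$ is the Cantor set there are no isolated points at all, hence no outermost cuts whatsoever, and the entire construction collapses. Your claim that ``$O(p)$ is countably infinite when $E$ is infinite'' is exactly where the argument fails.

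The paper handles this by running essentially your finite argument (phrased via peripheral pairs rather than maximal anticliques, but the content is the same) and then treating infinite $E$ by a completely different device: a \emph{principal spherical exhaustion}. One chooses an increasing sequence of ``spheres'' $S_1, S_2, \ldots$ (finite collections of pairwise compatible cuts) so that collapsing the outer sides yields finite discrete quotients $\bar S_i$ each with at least five points, and $E \cong \varprojlim \bar S_i$ (\Cref{inverse limit}). \Cref{spheres to spheres} shows that $\Phi$ carries such an exhaustion of $E$ to one of $E'$; the finite case then furnishes bijections $\varphi_i\colon \bar S_i \to \overline{\Phi(S_i)}$ compatible with the restriction maps, and these assemble into the desired homeomorphism. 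Your outermost-cut idea is precisely what drives each finite stage $\bar S_i$, but it cannot be applied to $E$ globally.
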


For use in the proof, we introduce the following definition.
\begin{defn}
A \emph{sphere in $E$ with $n$ punctures and $k$ boundary components:}
is a collection of $k$ pairwise-compatible non-peripheral cuts $\gamma_1,\ldots,\gamma_k$ in $E$,
thought of as partitions $U_1 \sqcup V_1$ through $U_k \sqcup V_k$
such that each $U_i \cap U_j$ for $i$ and $j$ distinct is empty
and the total intersection $V_1 \cap \cdots \cap V_k$
is a finite set of size $n \ge 0$.
The cuts $\gamma_1,\ldots,\gamma_k$
are said to be the \emph{boundary components}
of the sphere $S$,
and the points of the total intersection are said to be \emph{punctures.}
A cut $\gamma$ distinct from each of the $\gamma_i$
is said to be in the \emph{interior} of such a sphere $S$
is whenever a side of $\gamma$ contains a point of $U_i$,
it contains $U_i$ completely.
Said another way, in this situation,
one of the components of ${(L(\gamma_1) \cap \cdots \cap L(\gamma_k))}^\perp$
corresponds to nonperipheral cuts in the Stone space
$(V_1 \cap \cdots \cap V_k) \sqcup \{ U_1, \ldots, U_k \}$;
a cut $\gamma$ is interior provided it corresponds to a nonperipheral cut in this Stone space.
\end{defn}

Observe that if $\gamma$ is a cut in the interior of a sphere
with $n$ punctures and $k$ boundary components,
then $\gamma$ may be thought of as dividing $S$
into two spheres $S'$ and $S''$
whose total number of punctures is $n$
and whose total number of boundary components is $k + 2$.

\begin{defn}
A \emph{principal spherical exhaustion} of $E$
is a collection of spheres $S_1, S_2, \ldots$
satisfying the following conditions.
\begin{enumerate}
\item (Increasing)
  Each boundary component of $S_i$ is an interior cut in $S_{i+1}$.
\item (Exhaustion)
  Each cut $\gamma$ in $E$ is interior to some $S_n$.
\item (Complexity)
  Each sphere formed from a boundary component of $S_i$
  together with those from $S_{i+1}$,
  supposing it has $n$ punctures and $k$ boundary components,
  satisfies $n + k \ge 5$.
\item (Infinite Complement)
  For each boundary component $\gamma = U \sqcup V$
  of $S_n$ where $U$ is chosen in the notation as above,
  the clopen set $U$ is infinite.
\end{enumerate}
\end{defn}

By beginning with a pants decomposition of $E$
and forgetting certain pants curves,
one sees that principal spherical exhaustions exist whenever $E$
is infinite and second countable.
For each sphere $S_i$ in the exhaustion,
let $\bar S_i$ be a finite set with $n + k$ points,
one for each puncture and boundary component of $S_i$.
Then observe that there exist continuous maps $E \to \bar{S}_i$ and $\bar{S}_{i+1} \to \bar{S}_i$:
for the former, one collapses each clopen set $U$ for each boundary component in $S_i$ to a single point;
continuity follows because $U$ is clopen.
The same argument works for the latter case by thinking of each boundary component of $S_i$
as yielding a clopen set in $\bar {S}_{i+1}$.
We have that for each $i$, the following triangle commutes
\[
  \begin{tikzcd}
    & E \ar[ld] \ar[rd] & \\
    \bar{S}_{i+1} \ar[rr] & & \bar{S}_i. \\
  \end{tikzcd}
\]

The maps $\bar{S}_{i+1} \to \bar{S}_i$ form a directed system of finite discrete spaces,
and we have the following lemmas.

\begin{lem}\label{inverse limit}
  $E$ is canonically homeomorphic to the inverse limit of this system.
\end{lem}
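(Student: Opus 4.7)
The plan is to use the universal property of the inverse limit. The commuting triangles in the displayed diagram furnish, for each compatible tuple, a canonical continuous map $\pi \colon E \to \varprojlim \bar S_i$. Since $E$ is compact Hausdorff and the inverse limit is Hausdorff (as a subspace of the product of finite discrete spaces), it suffices to prove that $\pi$ is a continuous bijection.

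For injectivity, suppose $x \neq y$ in $E$. Since $E$ is totally disconnected and Hausdorff, some clopen set contains $x$ but not $y$, and its complement is clopen too; together they yield a cut $\gamma = U \sqcup V$ separating $x$ from $y$. By the exhaustion property of a principal spherical exhaustion, $\gamma$ is interior to some $S_n$, meaning that each clopen set $U_j$ appearing as a side of a boundary component of $S_n$ lies entirely on one side of $\gamma$. The map $\pi_n \colon E \to \bar S_n$ collapses each such $U_j$ to a point and fixes each puncture, so its fibers are exactly the ``basic clopen pieces'' of $S_n$; since $\gamma$ factors through this partition, the points $x$ and $y$ lie in distinct basic pieces, and so $\pi_n(x) \neq \pi_n(y)$. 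Hence $\pi(x) \neq \pi(y)$.

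For surjectivity, fix a compatible tuple $(s_i) \in \varprojlim \bar S_i$ and set $F_i = \pi_i^{-1}(s_i)$. Each $\pi_i$ is surjective: every puncture of $S_i$ is already a point of $E$, and the ``Infinite Complement'' condition guarantees that each clopen set $U_j$ associated to a boundary component of $S_i$ is nonempty, so it nontrivially maps to the corresponding point of $\bar S_i$. Consequently each $F_i$ is a nonempty closed subset of $E$. Commutativity of the triangle in the excerpt combined with compatibility of $(s_i)$ with the bonding maps $\bar S_{i+1} \to \bar S_i$ forces $F_{i+1} \subseteq F_i$, so $\{F_i\}$ is a nested sequence of nonempty closed sets in the compact space $E$; its intersection is nonempty, and any point therein maps to $(s_i)$.

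The only mildly delicate step is the injectivity argument, which hinges on correctly interpreting ``interior cut'' as the statement that $\gamma$ descends through the quotient $\pi_n$; everything else is a standard inverse limit calculation. A continuous bijection from a compact space to a Hausdorff space is a homeomorphism, so $\pi$ is the desired canonical homeomorphism.
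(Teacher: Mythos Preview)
Your proposal is correct and follows essentially the same approach as the paper: obtain the canonical map from the universal property, reduce to bijectivity via the compact-to-Hausdorff argument, use the (Exhaustion) property for injectivity, and use nested nonempty closed sets and compactness for surjectivity. Your write-up is in fact slightly more explicit than the paper's in spelling out why each $\pi_i$ is surjective and why $\gamma$ being interior to $S_n$ forces $x$ and $y$ into distinct fibers of $\pi_n$.
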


\begin{proof}
  Indeed, by abstract nonsense, from the existence and compatibility of the maps $E \to \bar{S}_i$,
  we know that there exists a continuous map
  $\Phi\colon E \to \varprojlim \bar{S}_i$.
  Since $E$ and the inverse limit are compact and Hausdorff,
  it suffices to show that $\Phi$ is a bijection.
  
  Recall that if $\varphi_i \colon \bar{S}_{i+1} \to \bar{S}_i$ is the map above,
  a point of the inverse limit is a tuple $(x_i)$ in the product $\prod_i \bar{S}_i$
  with the property that for each $i$, we have $\varphi_i(x_{i+1}) = x_i$.
  If $(x_i)$ is such a sequence,
  we may think of it in $E$
  as a choice, for each $i$, of a puncture or boundary component of $S_i$
  with the compatibility property that the puncture or boundary component chosen at the $(i+1)$st stage
  is contained in the boundary component chosen at the $i$th stage (or equal to if the $i$th choice is a puncture).
  This yields a chain of nonempty clopen subsets of $E$, say
  $U_1 \supset U_2 \supset \cdots$.
  Any point in the total intersection of these sets
  will be mapped by $\Phi$ to the sequence $(x_i)$,
  so the map $\Phi$ is surjective.
  By the (Exhaustion) property,
  if $x \ne y$ in $E$,
  every curve $\gamma$ which separates $x$ from $y$ is interior to some $S_i$,
  and thus the $\Phi$-images of $x$ and $y$ are distinct.

\end{proof}

\begin{lem}\label{spheres to spheres}
  Suppose that $E$ is infinite and second countable.
  If $\Phi\colon \mathscr{C}(E) \to \mathscr{C}(E')$ is an isomorphism of complexes of cuts
  and $S_1,S_2,\ldots$ is a principal spherical exhaustion of $E$,
  the cuts comprising each sphere $S_i$
  assemble into a sphere in $E'$ with the same number of punctures and boundary components.
  Moreover, these spheres are a principal spherical exhaustion
  $\Phi(S_1),\Phi(S_2),\ldots$ of $E'$,
  which is also infinite and second countable.
\end{lem}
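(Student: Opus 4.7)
The plan is to trace each combinatorial feature that defines a sphere, and each axiom of a principal spherical exhaustion, through the graph isomorphism $\Phi$. Since $\Phi$ preserves compatibility and non-peripherality of cuts, for each sphere $S$ in the given exhaustion, with boundary cuts $\gamma_1, \ldots, \gamma_k$ and $n$ punctures, the images $\Phi(\gamma_1), \ldots, \Phi(\gamma_k)$ are $k$ pairwise-compatible non-peripheral cuts of $E'$. The work is then to verify that these images assemble into a sphere of the same combinatorial type, and that the sequence $\Phi(S_1), \Phi(S_2), \ldots$ inherits all four axioms of a principal spherical exhaustion.

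First, I would pin down the image sphere by an invariant analysis of the opposite graph. By the discussion preceding \Cref{opposite graph}, extended to the $k$-cut setting as in \Cref{spheres before spheres}, the components of $(L(\gamma_1) \cap \cdots \cap L(\gamma_k))^\perp$ correspond to complementary regions of the system $\gamma_1, \ldots, \gamma_k$, with the central sphere region contributing a component canonically isomorphic to $\mathscr{C}(X_S)$ for the finite Stone space $X_S = (V_1 \cap \cdots \cap V_k) \sqcup \{U_1, \ldots, U_k\}$ on $n+k$ points; the (Infinite Complement) axiom guarantees that each of the $k$ exterior regions contributes an \emph{infinite} component. Since $\Phi$ carries this opposite graph isomorphically to the analogous graph for $\Phi(\gamma_1), \ldots, \Phi(\gamma_k)$, it identifies the unique finite component $C$ with a unique finite component $C'$ on the $E'$ side and matches the $k$ infinite components bijectively. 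The vertex count of $\mathscr{C}$ of an $m$-point Stone space is $\tfrac{1}{2}(2^m - 2m - 2)$, strictly increasing in $m$ for $m \ge 4$, so $|C'| = |C|$ recovers $n+k$; since $k$ is preserved by $\Phi$ as the cardinality of the image collection, $n$ is recovered as well.

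Second, I would argue that the images are actually arranged as a sphere with $k$ boundary components, rather than in some nested or degenerate configuration. The key is a combinatorial test for whether a cut $\gamma_i$ borders the central region $R$ of the sphere: $\gamma_i$ borders $R$ precisely when there exists a cut $\delta$ that crosses $\gamma_i$ yet is compatible with all $\gamma_j$ for $j\ne i$. On the $E$ side this holds for every $i$ by the sphere hypothesis, so transporting this property through $\Phi$ (which preserves crossing, compatibility, and the collection in question) yields the same conclusion on the $E'$ side for every $\Phi(\gamma_i)$. Labeling $\Phi(\gamma_i) = U'_i \sqcup V'_i$ so that $V'_i$ is the side meeting the $C'$-region then forces $U'_i \cap U'_j = \varnothing$ for distinct $i, j$, and the vertex count of $C'$ forces the central region $V'_1 \cap \cdots \cap V'_k$ to be a finite set of size $n$. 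Thus the images form a sphere in $E'$ with the correct invariants.

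Finally, I would verify the four exhaustion axioms. (Increasing), (Exhaustion), and (Complexity) translate directly, because ``$\delta$ is interior to $S$'' and ``$\delta$ is a boundary component of $S$'' are both combinatorially detectable in $\mathscr{C}(E)$ and preserved by $\Phi$. (Infinite Complement) is preserved because infiniteness of the $U$-side of a boundary cut $\gamma$ is detected by infiniteness of the corresponding exterior component of $L(\gamma)^\perp$. The existence of an exhaustion in $E'$ then forces $E'$ to be infinite (each successive sphere introduces new non-peripheral cuts) and second countable (because $\mathscr{C}(E') \cong \mathscr{C}(E)$ is countable, so the Boolean algebra of clopen sets of $E'$ is countable). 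The main obstacle is the second step: confirming that the finite component $C'$ really corresponds to a sphere region bordered by all $k$ images $\Phi(\gamma_i)$, rather than a configuration in which some $\Phi(\gamma_j)$ is internal or does not appear on the boundary. This requires careful application of \Cref{spheres before spheres} together with the ``bordering'' test above.
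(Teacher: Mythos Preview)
Your overall strategy matches the paper's closely: characterize spheres satisfying (Infinite Complement) purely in terms of $\mathscr{C}(E)$ via the opposite graph $(L(\gamma_1)\cap\cdots\cap L(\gamma_k))^\perp$, transport the characterization through $\Phi$, and then check the exhaustion axioms. Your treatment of the first and third steps is essentially correct; the paper recovers $n+k$ from the dimension of the flag completion of the finite component rather than from your vertex count $\tfrac12(2^m-2m-2)$, but either invariant works.

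The gap is in your second step. Your ``bordering test''---that $\gamma_i$ borders the central region $R$ if and only if some $\delta$ crosses $\gamma_i$ and is compatible with every $\gamma_j$ for $j\ne i$---does not actually detect which region $\gamma_i$ borders. Given mild complexity in the complementary regions, \emph{every} $\gamma_i$ in \emph{any} pairwise-compatible configuration passes this test. For instance, in a path configuration $R_1\,\text{--}\,\gamma_1\,\text{--}\,R_2\,\text{--}\,\gamma_2\,\text{--}\,R_3\,\text{--}\,\gamma_3\,\text{--}\,R_4$ one can build a $\delta$ crossing only the middle curve $\gamma_2$ (take one side of $\delta$ to be a proper clopen subset of $R_2\cup R_3$ meeting both $R_2$ and $R_3$, with $R_1$ and $R_4$ placed entirely on the other side), even though $\gamma_2$ need not border whichever region happens to be finite. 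So knowing that the opposite graph has $k+1$ components with a unique finite one, together with your test, does not force the images $\Phi(\gamma_i)$ into a star configuration; a nested or path-like arrangement on the $E'$ side is not yet ruled out.

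The paper closes exactly this gap by adding to its intrinsic characterization of a sphere the requirement that the adjacency graph $A(\{\gamma_1,\ldots,\gamma_k\})$ be \emph{complete}: for every pair $i\ne j$ there is a cut crossing both $\gamma_i$ and $\gamma_j$ and no other $\gamma_\ell$. In the $S^2$ picture, completeness of the adjacency graph is precisely the condition that the dual tree to the curve system is a star, i.e.\ that all $k$ curves bound a common complementary region. Since adjacency is phrased entirely in terms of crossing and compatibility in $\mathscr{C}(E)$, it is preserved by $\Phi$, and this is what you should use in place of your bordering test. (The paper also records the condition that both sides of each $\gamma_i$ are infinite when $k>1$, to pin down which of the $k+1$ regions is the finite one in the $k=2$ case; your argument handles this correctly via the exterior components being infinite.)
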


\begin{proof}
  The graphs $\mathscr{C}(E)$ and $\mathscr{C}(E')$ have the same cardinality,
  which is countably infinite just when $E$ (or $E'$) is infinite and second countable.
  
  We claim that a collection $\gamma_1,\ldots,\gamma_k$ of pairwise-compatible cuts in $E$
  is actually a sphere $S$ in $E$
  with $n$ punctures and $k$ boundary components satisfying
  $n + k \ge 4$ and the (Infinite Complement) hypothesis
  if and only if the following statements hold.
  \begin{enumerate}
  \item The adjacency graph $A(\{\gamma_1,\ldots,\gamma_k\})$ is complete.
  \item ${(L(\gamma_1) \cap \cdots \cap L(\gamma_k))}^\perp$ has $k+1$ components,
    exactly one of which is finite.
  \item If $k > 1$, then for each cut $\gamma_i = U_i \sqcup V_i$, both $U_i$ and $V_i$ are infinite.
  \end{enumerate}

  Embed $E$ in $S^2$ and realize $\gamma_1,\ldots,\gamma_k$ by disjoint Jordan curves.
  It is straightforward to see that these cuts form a sphere with $n$ punctures and $k$ boundary components
  exactly when $S^2 - \{\gamma_1,\ldots,\gamma_k\}$ is homeomorphic to a disjoint union
  of $k$ discs and a sphere with $k$ boundary components
  and the intersection of $E$ with the sphere component is a finite discrete set of size $n$.
  In this situation the adjacency graph is complete.
  Supposing further that this sphere satisfies (Infinite Complement),
  we see by \Cref{spheres before spheres}
  that ${(L(\gamma_1) \cap \cdots \cap L(\gamma_k))}^\perp$ has $k+1$ components
  and that exactly one component is finite.
  When $k > 2$, (Infinite Complement) moreover \emph{implies} the third item above,
  while when $k = 2$, we have a sphere with $n$ punctures and $k$ boundary components
  when it is the disc complements in $S^2$ which have infinitely many points of $E$.
  Therefore if we have a sphere, then the conditions above hold.

  Suppose now that the conditions above hold.
  It is clear that these conditions are sufficient when $k = 1$, so suppose $k > 1$.
  Since both $U_i$ and $V_i$ are infinite for each $\gamma_i$,
  if we have a sphere system, it will satisfy (Infinite Complement).
  Drawing the cuts as Jordan curves on $S^2$,
  observe that the hypothesis on the adjacency graph implies
  that we have that $S^2 - \{\gamma_1,\ldots,\gamma_k\}$ is homeomorphic to a disjoint union
  of $k$ discs and a sphere with $k$ boundary components.
  Since we assume one component of ${(L(\gamma_1) \cap \cdots \cap L(\gamma_k))}^\perp$
  is finite, the assumption that each $U_i$ and $V_i$ are infinite
  implies that that component must correspond to the sphere with $k$ boundary components.
  As we saw in the proof of \Cref{opposite graph},
  in order for ${(L(\gamma_1) \cap \cdots \cap L(\gamma_k))}^\perp$ to have the full $k + 1$ components,
  the closure of the component of $S^2 - (E \cup \gamma_1 \cup \cdots \cup \gamma_k)$
  corresponding to this finite component of the opposite graph
  must have an essential simple closed curve,
  which requires a total number of punctures and boundary components satisfying $n + k \ge 4$.

  Now, each of the three conditions above is,
  one sees,
  preserved by $\Phi$,
  so the image of a collection of cuts in $\mathscr{C}(E)$ forming a sphere in $E$
  with $n$ punctures and $k$ boundary components
  is a sphere with $k$ boundary components.
  In fact, the quantity $n+k$ is preserved,
  since it can be read off of the dimension of the flag completion of the opposite graph
  of the finite component of ${(L(\gamma_1) \cap \cdots \cap L(\gamma_k))}^\perp$:
  we have $n + k - 4$ is equal to that dimension.

  Therefore if $S_1,S_2,\ldots$
  is a principal spherical exhaustion of $E$,
  we have spheres $\Phi(S_1),\Phi(S_2),\ldots$ in $E'$.
  The properties (Increasing) and (Exhaustion) and (Complexity)
  are clearly preserved by $\Phi$,
  and the lemma follows.
\end{proof}

\begin{proof}[Proof of \Cref{isomorphisms are geometric}]
  Observe that the graphs $\mathscr{C}(E)$ are finite
  if and only if $E$ is finite,
  so we may suppose that either both $E$ and $E'$
  are finite or both are infinite.

  Suppose at first that $E$ and $E'$ are finite.
  Since the dimension of $\mathscr{C}(E)$
  as a simplicial complex is $|E| - 4$
  when $E \ge 4$,
  we see that $|E| = |E'|$.
  We want to produce from $\Phi$ a bijection $f\colon E \to E'$
  whose action on cuts agrees with $\Phi$.

  In the case when $|E| = |E'| = 4$,
  the ``graphs'' $\mathscr{C}(E)$ and $\mathscr{C}(E')$
  are finite sets of three points,
  each of which corresponds, after choosing
  a basepoint $\star$ in $E$ and $\star'$ in $E'$
  to a two-element set containing $\star$ or $\star'$, respectively.
  There is therefore a bijection of $E$ with $E'$
  sending $\star$ to $\star'$
  and the point $a$ making up, for example the cut $\{\star, a\}$
  to the element $a'$ making up the cut corresponding to $\Phi(\{\star, a'\})$.
  Therefore the theorem holds for $E$ having size four;
  we therefore assume that $|E| \ge 5$.

  Let $\gamma$ and $\gamma'$ be outermost cuts
  which are not compatible,
  say $\gamma  = \{a, b\} \sqcup (E - \{a, b\})$
  and $\gamma' = \{a, c\} \sqcup (E - \{a, c\})$,
  and consider the peripheral pair
  $\delta = \{b, c\} \sqcup (E - \{b, c\})$
  and $\eta = \{a, b, c \} \sqcup (E - \{a, b, c\})$.
  In fact, notice that $\eta$ forms a peripheral pair
  with $\gamma$ and $\gamma'$ as well.

  Now, because $\Phi$ sends outermost cuts
  to outermost cuts and peripheral pairs to peripheral pairs
  and preserves compatibility,
  we conclude that $\Phi(\gamma) = \{a', b'\} \sqcup E' - \{ a', b' \}$,
  that $\Phi(\gamma') = \{a', c'\} \sqcup E' - \{a', c'\}$,
  that $\Phi(\eta) = \{ a', b', c' \} \sqcup E' - \{a', b', c'\}$
  and therefore that $\Phi(\delta) = \{b', c'\} \sqcup E - \{b', c'\}$.
  In other words,
  the \emph{pattern of intersection}
  of $\gamma$, $\gamma'$ and $\delta$
  is faithfully preserved by $\Phi$.
  Define, therefore, a map $f\colon E \to E'$
  by the rule that $f(a) = a'$,
  where $\gamma$ and $\gamma'$ are outermost cuts such that
  the intersection of the smaller sides of $\gamma$ and $\gamma'$ in $E$ is the singleton $\{a\}$
  and whose $\Phi$-images intersect in the singleton $\{a'\}$.

  Observe that this is well-defined independent of the choice of $\gamma$
  and $\gamma'$.
  For indeed,
  if $\gamma_1$ and $\gamma_1'$ are two other cuts
  whose smaller sides intersect in $\{a\}$,
  observe that under $\Phi$,
  the smaller sides of $\Phi(\gamma)$ and $\Phi(\gamma_1)$
  as well as $\Phi(\gamma')$ and $\Phi(\gamma_1')$ must intersect.
  In fact, the point of intersection
  must be $a'$:
  if $\Phi(\gamma_1) = \{b', a''\} \sqcup (E' - \{b', a'' \})$
  and $\Phi(\gamma_1') = \{c', a''\} \sqcup (E' - \{c', a'' \})$,
  notice that $\Phi(\gamma_1)$ is compatible with $\Phi(\gamma')$,
  in contradiction to what is true of $\gamma_1$ and $\gamma'$.

  The map $f$ must be injective and therefore a bijection:
  if $f(a) = f(b)$,
  we would have that the smaller sides of $\Phi(\gamma')$
  and $\Phi(\delta)$ as well as the smaller sides of $\Phi(\gamma)$
  and $\Phi(\delta)$ intersect in the same point,
  namely $f(a) = f(b)$.
  But from this, we must conclude that $\Phi(\gamma') = \Phi(\delta)$,
  since otherwise we would have that $f(c)$ is not well-defined.
  This contradicts the fact that $\Phi$ is an isomorphism.

  Indeed, we see from the construction of $f$
  and the proof that it is injective
  that in fact the push-forward action of $f$ on cuts
  agrees with $\Phi$.

  So much for the finite case.
  Suppose that $E$ is infinite and second countable.
  Then $E'$ is as well.
  Let $S_1, S_2,\ldots$ be a principal spherical exhaustion of $E$.
  By \Cref{spheres to spheres},
  the boundary cuts of each sphere
  assemble into a principal spherical exhaustion
  $\Phi(S_1),\Phi(S_2),\ldots$.

  Consider $S_{i+1}$ and choose a boundary component of $S_i$;
  it divides $S_{i+1}$ into two spheres.
  By (Complexity) if this sphere $S'$ has $n$ punctures and $k$ boundary components,
  we have $n + k \ge 5$.
  If $U_1,\ldots,U_k$ are the ``outer sides'' of the boundary components of this sphere,
  Consider the space $\bar{S'}$
  formed from $E$ by collapsing each of these $U_i$ to a point.
  This is a finite discrete space of size $n + k$.
  The same is true for $\overline{\Phi(S')}$.
  If $K^\perp$ is the finite component of ${(L(\gamma_1) \cap \cdots \cap L(\gamma_k))}^\perp$,
  the action of $\Phi$ restricts to an isomorphism between
  $K = {(K^\perp)}^\perp$ and $\Phi(K)$, the opposite graph
  of the finite component of ${(L(\Phi(\gamma_1)) \cap \cdots \cap L(\Phi(\gamma_k)))}^\perp$.

  The graph $K$ is isomorphic to $\mathscr{C}(\bar{S'})$,
  as $\Phi(K)$ is to $\mathscr{C}(\overline{\Phi(S')})$.
  By the finite case, the isomorphism
  $\Phi|_K\colon K \to \Phi(K)$ is induced by a bijection $\varphi'\colon \bar{S'} \to \overline{\Phi(S')}$.

  Indeed, a similar story holds true for each $S_i$ directly---namely,
  there is a bijection $\varphi_i\colon \bar{S}_i \to \overline{\Phi(S_i)}$
  inducing the isomorphism of finite components.
  We claim that the maps $\varphi_i$ are compatible with the restrictions $\bar{S}_{i+1} \to \bar{S}_i$
  in the sense that the following diagrams commute
  \[
    \begin{tikzcd}
      \bar{S}_{i+1} \ar[d, "\varphi_{i+1}"] \ar[r] & \bar{S}_i \ar[d, "\varphi_i"] \\
      \overline{\Phi(S_{i+1})} \ar[r] & \overline{\Phi(S_i)}.
    \end{tikzcd}
  \]
  Consider the collection of cuts
  formed by the boundary components of $S_{i+1}$ and $S_i$,
  and let $A$ denote the adjacency graph of these cuts.
  By \Cref{spheres before spheres},
  the subgraph comprising the boundary components of $S_i$ is complete,
  as is each of the subgraphs comprising one of the spheres $S'$ as above.
  Since by (Complexity) each boundary component of $S_i$ divides $S_{i+1}$ into two spheres,
  the corresponding vertex of $A$ is a cut vertex---removing it disconnects the graph into two components.
  In fact, one may argue directly that after replacing each of these vertices with two vertices, the graph
  $A$ is a disjoint union of $k+1$ complete graphs,
  where $k$ is the number of boundary components of $S_i$.
  Similarly one may argue that $L^\perp$,
  the opposite graph of the intersection of links,
  has $k+1$ finite components,
  one corresponding to $S_i$,
  and $k$ corresponding to the boundary components of $S_i$.
  Each component of the cut vertex decomposition of $A$
  corresponds uniquely to a finite component of $L^\perp$.

  Let $\pi$ denote the map $\bar{S}_{i+1} \to \bar{S}_i$
  and the map $\overline{\Phi(S_{i+1})} \to \overline{\Phi(S_i)}$.
  This map is ``the identity'' on punctures in the domain which come from punctures in the range.
  Otherwise, if $x$ is a puncture or boundary component in the domain which
  does not come from a puncture in the range,
  there is a unique boundary component in the range
  whose ``outer side'' contains $x$,
  and the map $\pi$ picks the point corresponding to that ``outer side''.
  
  The maps $\varphi_i$ and $\varphi_{i+1}$ send points corresponding to boundary components
  to points corresponding to boundary components
  and thus points corresponding to punctures to punctures.
  Moreover, $\Phi$ and thus $\varphi_{i+1}$ preserves the properties of $A$ and $L^\perp$ discussed above,
  so arguing as in the finite case,
  we see that the diagram above commutes.

  Since by \Cref{inverse limit},
  the spaces $E$ and $E'$ are inverse limits of these principal spherical exhaustions
  and we have bijections $\varphi_i$ which are compatible with the restriction maps,
  these bijections assemble into a homeomorphism $\varphi\colon E \to E'$.
  By the (Exhaustion) property, the agreement of the action of $\varphi$ with $\Phi$
  on each cut may be checked in some finite sphere, so follows from the finite case.
\end{proof}

\section{Faithfulness of the Action}

To complete the proof of \Cref{maintheorem},
we need to show that $\homeo(E)$
acts faithfully on $\mathscr{C}(E)$
when $E$ has at least five elements.
This, together with the foregoing theorem,
shows that the map $\homeo(E) \to \aut(\mathscr{C})$
is a continuous bijective homomorphism.
To show that it is an isomorphism of topological groups,
we need to know that the map is open.

One way to show this latter property is to say that if $K \subset U$
is compact and $U$ is open,
there exists a finite collection
$G = \{\gamma_1,\ldots,\gamma_k\}$
of vertices of $\mathscr{C}(E)$ such that $V(K,U)$
contains $\Stab(G)$;
that is, if $f \in \homeo(E)$
maps $K$ into $U$,
then actually the action of $f$ on cuts fixes each $\gamma_i$ as well.

Seeing that $\homeo(E)$ acts faithfully is not hard:
supposing that $f \in \homeo(E)$ is not the identity
but acts by the identity on $\mathscr{C}(E)$,
there exists some $x \in E$ such that $f(x) \ne x$.
Because $E$ has at least five elements,
there exist compatible non-peripheral cuts $\gamma = U \sqcup V$
and $\eta = U' \sqcup V'$ with the following properties:
\begin{enumerate}
\item $x \in U$ and $f(x) \in U'$.
\item $U \cap U' = \varnothing$,
\item $V \cap V' \ne \varnothing$.
\end{enumerate}
Because $f(x) \ne x$,
we have that if $f(\gamma) = \gamma$ and $f(\eta) = \eta$,
then in fact $f(U) = V$ and similarly $f(V') = U'$,
hence $f(U') = V'$ and $f(V) = U$.
But this is impossible, since $U \cap U'$ is empty while $V \cap V'$ is not.

\begin{proof}[Proof of \Cref{maintheorem}]
  By the remarks above, we have that $\homeo(E) \to \aut(\mathscr{C})$ is continuous and bijective.
  By \Cref{open lemma} below, it is also open, hence a homeomorphism.
  (When $E$ is finite, we have a continuous bijection between \emph{discrete} (finite) groups,
  which is clearly a homeomorphism.)
\end{proof}

\begin{lem}\label{open lemma}
  Suppose that $E$ is second countable and infinite.
  If $K$ is compact and $U$ is clopen such that $K \subset U$,
  there exists a finite subset $G \subset \mathscr{C}$
  such that $V(K,U)$ in $\homeo(E)$
  contains $\Stab(G)$.
\end{lem}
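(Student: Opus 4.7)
The plan is to exploit the ``moreover'' clause of the Stone Duality Theorem to reduce matters to stabilizing a single Boolean algebra element, and then to convert ``stabilize $a \in B$'' into ``stabilize a short list of non-peripheral cuts.'' First I would write $U = U_a$ for some $a \in B$. The argument in the moreover clause shows that any $\varphi \in \aut(B)$ with $\varphi(a) = a$ satisfies $\varphi^*(U) = U$, and hence $\varphi^*(K) \subset U$. So it suffices to produce a finite $G \subset \mathscr{C}$ such that every $f \in \homeo(E)$ stabilizing each $\gamma \in G$ as an unordered pair in fact fixes the element $a$ itself, not merely the cut $\{a, \lnot a\}$.

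The core technical gadget will be: if $b' < b$ strictly in $B$ and both cuts $\{b, \lnot b\}$ and $\{b', \lnot b'\}$ are non-peripheral, then any $f$ stabilizing both cuts must send $b$ to $b$. Indeed, if $f(b) = \lnot b$, order preservation forces either $f(b') = b' \le \lnot b$, whence $b' = b' \wedge b = 0$, contradicting non-peripherality, or $f(b') = \lnot b' \le \lnot b$, i.e.\ $b \le b'$, contradicting $b' < b$. Using this, when $\{a, \lnot a\}$ is itself non-peripheral and $|U| \ne |E - U|$ as cardinals---which always holds when one side is finite, since $E$ is infinite---the singleton $G = \{\{a, \lnot a\}\}$ already suffices, because the swap $f(a) = \lnot a$ is impossible by cardinality. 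When $\{a, \lnot a\}$ is non-peripheral and $|U| = |E - U|$, both sides must be infinite; since $U$ is infinite and totally disconnected I can choose a clopen $U' \subsetneq U$ with $|U'| \ge 2$ and $|U - U'| \ge 1$, yielding $a' < a$ with non-peripheral $\{a', \lnot a'\}$, and take $G = \{\{a, \lnot a\}, \{a', \lnot a'\}\}$.

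The principal obstacle is the \emph{peripheral case}: $|U| = 1$ or $|E - U| = 1$, in which $\{a, \lnot a\}$ is peripheral and hence not a vertex of $\mathscr{C}$. In either scenario the distinguished side is a clopen singleton $\{p\}$ with $p$ isolated, and the conclusion $f(K) \subset U$ reduces to $f(p) = p$. I would pick two distinct points $q, r \in E - \{p\}$ (available since $E$ is infinite) and let $G$ consist of the cuts $\{\{p, q\}, E - \{p, q\}\}$ and $\{\{p, r\}, E - \{p, r\}\}$, each non-peripheral because its large side is infinite. Any $f$ stabilizing both must send $\{p, q\}$ to itself, the swap being excluded by cardinality, and likewise $\{p, r\}$ to itself; hence $f(p) \in \{p, q\} \cap \{p, r\} = \{p\}$. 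The degenerate cases $U = \varnothing$ and $U = E$ need no cuts, and collecting all the cases above completes the construction of $G$.
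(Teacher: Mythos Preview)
Your reduction via Stone duality to ``force $f(a)=a$'' is cleaner than the paper's route, and your gadget for the non-peripheral case---pairing $\{a,\lnot a\}$ with a strictly smaller non-peripheral $\{a',\lnot a'\}$ to exclude the swap---is correct. The paper instead covers $K$ by disjoint clopen pieces to build a \emph{sphere} $S$, takes $G$ to be all boundary and interior cuts of $S$, and invokes faithfulness of the action on $\mathscr{C}(\bar S)$ (with peripheral pairs patched in when the complexity is too low); your argument sidesteps all of that machinery.

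There is, however, a genuine gap in your peripheral case. You propose the cuts $\{p,q\}\sqcup(E\setminus\{p,q\})$ and $\{p,r\}\sqcup(E\setminus\{p,r\})$, but a cut must be a partition into \emph{clopen} sets, and $\{p,q\}$ is clopen only when $q$ is isolated. If, for instance, $E$ is a Cantor set together with a single isolated point $p$, then $p$ is the unique isolated point and no such $q,r$ exist. The repair is to trade points for clopen sets: partition the infinite Stone space $E\setminus\{p\}$ into clopen pieces $C,C'$ each of size at least two (always possible), and set $A=\{p\}\cup C$, $A'=\{p\}\cup C'$. The cuts $A\sqcup C'$ and $A'\sqcup C$ are non-peripheral with $A\cap A'=\{p\}$ (indeed, they form a peripheral pair). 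A short case check---using that $f(A)\cap f(A')=f(\{p\})$ is a singleton while the ``wrong'' intersections $C\cap C'$, $A\cap C$, $A'\cap C'$ are empty or too large---rules out every swap, so any $f$ stabilizing both cuts satisfies $f(A)=A$ and $f(A')=A'$, whence $f(p)=p$. With this correction your argument goes through.
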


\begin{proof}
  Since $K$ is compact,
  we may cover it with finitely many disjoint clopen sets $U_1,\ldots,U_n$.
  We may assume that these sets lie within $U$.
  Each of the sets $U_1, \ldots, U_n$ and $\bigcup_i U_i$
  (which we may assume without loss by shrinking $U$ is actually $U$)
  is half of a cut in $E$,
  but not all of these cuts are non-peripheral.
  Considering only those cuts which are non-peripheral,
  we see we have a sphere $S$ in $E$.
  When the complexity of this sphere is at least $5$,
  we claim that if an element $f$ of $\homeo(E)$
  fixes each boundary cut of $S$ and each interior cut of $S$,
  then $f \in V(K,U)$.
  (Indeed, some smaller finite number of cuts should suffice,
  but we have no need for optimality.)

  To see this, note that if $f$ fixes each boundary cut of $S$,
  it clearly descends to an automorphism of the interior cuts in $S$,
  hence a permutation of the set of punctures and boundary components of $S$.
  Since the total number of these is at least five,
  we have by the faithfulness of the action
  that the permutation induced by $f$ is trivial.
  It follows that $f \in V(K,U)$,
  for $f$ must send each $U_i$, which corresponds either to a puncture or a boundary component of $S$,
  to itself, hence clearly inside $U$.

  Supposing that $S$ has smaller complexity,
  we need to add additional cuts in order to guarantee that $f \in V(K,U)$.
  If $K$ is infinite, we may simply choose a finer collection of disjoint clopen sets $U_i$
  in order to make the complexity of the sphere grow.
  If $K$ (and without loss of generality $U$) is finite,
  add for each point of $K$ a peripheral pair.
  Any element of $\homeo(E)$ fixing the peripheral pair,
  we have seen,
  fixes the point.
  Repeating for each point of $K$,
  we conclude.
\end{proof}

\section*{Acknowledgements}
The authors thank the anonymous referee for an extremely thoughtful read and many helpful suggestions. The second author also thanks Santana Afton, George Domat and Hannah Hoganson for helpful conversations and their interest in this line of inquiry.

\bibliographystyle{alpha}
\bibliography{bib.bib}
\end{document}